\newtheorem{theorem}{Theorem}[section]
\newtheorem{definition}[theorem]{Definition}
\newtheorem{lemma}[theorem]{Lemma}
\newtheorem{proposition}[theorem]{Proposition}
\newtheorem{remark}[theorem]{Remark}
\newenvironment{proof}[1][Proof]{\noindent \emph{#1.} }{\hfill \ 
\rule{0.5em}{0.5em}}
\makeatletter\@addtoreset{equation}{section}\makeatother
\makeatletter\@addtoreset{figure}{section}\makeatother
\makeatletter\@addtoreset{table}{section}\makeatother
\begin{document}

\title{Fast tensor-based electrostatic energy  calculations  \\
in the perspective of protein-ligand  docking problem }

\author{Peter Benner \thanks{Max Planck Institute for Dynamics of Complex 
Technical Systems, Sandtorstr.~1, D-39106 Magdeburg, Germany
({\tt benner@mpi-magdeburg.mpg.de})}
\and Boris N. Khoromskij \thanks{Max Planck Institute for
        Mathematics in the Sciences, Inselstr.~22-26, D-04103 Leipzig,
        Germany ({\tt bokh@mis.mpg.de}); 
        Max Planck Institute for Dynamics of Complex 
Technical Systems, Sandtorstr.~1, D-39106 Magdeburg, Germany;
        OVGU University of Magdeburg, Germany.}
\and
Venera Khoromskaia \thanks{Max Planck Institute for
        Mathematics in the Sciences,  Inselstr.~22-26, D-04103 Leipzig, Germany ({\tt vekh@mis.mpg.de})}
  \and Matthias Stein \thanks{Max Planck Institute for Dynamics of Complex 
Technical Systems, Sandtorstr.~1, D-39106 Magdeburg, Germany
        ({\tt matthias.stein@mpi-magdeburg.mpg.de})}
        \\  
            }


\maketitle

\begin{abstract}
We propose and justify a new  approach  for fast calculation of 
 the electrostatic interaction energy of clusters of charged particles   
 in constrained energy minimization 
 in the framework of rigid  protein-ligand docking.
  Our ``blind search'' docking technique is based on  the   low-rank  
 range-separated (RS) tensor-based representation of the free-space electrostatic potential of the biomolecule  represented on large   $n\times n\times n$ 3D grid. 
  We show that both the collective electrostatic potential of a complex protein-ligand system
    and the respective electrostatic interaction  energy can be calculated  by 
 tensor techniques in $O(n)$-complexity, such that the numerical cost for energy calculation 
 only mildly (logarithmically) depends on the number of particles in the system.  
 Moreover, tensor representation of the 
 electrostatic potential enables usage of large 3D   Cartesian grids 
 (of the order of $n^3 \sim 10^{12}$), which could allow the accurate modeling of complexes with several large proteins.
In our approach selection of the correct geometric pose predictions in the localized posing process is based on the control 
of  van der Waals distance  between the target molecular clusters. 
Here, we confine ourselves by  constrained  minimization of the energy functional by using only fast tensor-based  free-space electrostatic energy recalculation for various rotations and translations of both clusters.  
Numerical tests of the electrostatic energy-based ``protein-ligand  docking'' algorithm applied to synthetic and realistic input data present a proof of concept for rather complex particle configurations. 
  The method may be used in the framework of the traditional 
stochastic or deterministic posing/docking techniques. 
 \end{abstract}

\noindent\emph{Key words:}
Tensor decompositions,  long-range interactions, range-separated tensor formats, 
many-particle systems, docking problem, geometric/electrostatic posing, energy landscape, gradient calculations,
machine learning

\noindent\emph{AMS Subject Classification:} 65F30, 65F50, 65N35, 65F10

\section{Introduction}\label{sec:Intro}

Docking of large complex molecular clusters is one of the challenging computational problems
 in biomolecular computer simulation and drug design \cite{Nature_Rev:2004,Collony83}. 
 \emph{In silico} techniques are now   
 important procedures   in the synthesis of new structures both in protein formation and in 
 construction of nano-structures compounds.   The many-particle systems of interest
may arise from the modeling, design or classification of large biomolecules or 
shape optimization of the bulk materials modeled via lattice structured systems 
\cite{HoEa:88,DavMcC:90,SharpHonig90,Honig95,Neves-Petersen2003}.
In this concern both the energy functional and the many-body electrostatic potential  play the central role in the quantitative approaches.
The most popular implicit solvent model for computation of the electrostatic potential in bio molecules is 
  the Poisson-Boltzmann equation (PBE) in $\mathbb{R}^3$, 
which is traditionally solved by finite element or finite difference methods, 
see \cite{Holst94,WaBa:04,Baker:04,BeKKKS:21,KKKSBen:21}. 
The domain decomposition approach \cite{LiStCaMaMe:13} could be adapted in parallel computations.

The problem of protein-ligand docking is being successfully attacked  since decades \cite{Halperin:2002,charmm:98,McCam:2001,Nat:77}
and there is a 
large number of software packages for the numerical treatment of this task  \cite{GROM:99,Amber:2005,Amber:2013,Charm:2003}
 by using either the stochastic molecular dynamics \cite{Noe:2015,DMVR:2015,Noe:2017}  or by deterministic methods. 
  The latter are based on using  a large variety of scoring 
 functions and heuristic solution strategies\footnote{Commonly used methods for solution of the docking problem are based on the use of some a priori known localizations of docking positions.}, see 
 \cite{Meng:92,Nature_Rev:2004,HGZ:2010,LiuWang:15,HuZou:2010,DMVR:2015,Peng:2018}. 
 Reviews on different type of scoring  functions  
 for the protein docking problem  are presented in
 \cite{Nature_Rev:2004,HGZ:2010,Tsujikawa:2016,LiBa:2019}. 
  Also, the traditional methods may employ the  similarity search  \cite{MIFs_Wade:2001}, 
 geometric complementarity \cite{Wade:96,ChWe:2003} 
 and/or selection of the possible conformations by using  large data sets of descriptions, 
 the so-called virtual screening \cite{Nature_Rev:2004}, as well as 
 binding energy analysis \cite{Wade:95,Stein:2004}.
 Usage of the  molecular interaction fields was  considered in \cite{MIFs_Wade:2001}.
    Recent packages for protein structure prediction are based on 
 deep learning techniques via neural networks 
 \cite{AlphF:2021,DeepL:2020}.

In view of huge achievements in the area of protein-ligand  docking simulation
 and a large number of the corresponding  software one can notice that there are still some bottleneck problems,
  requiring novel mathematically justified approaches.
  The efficient calculations of many-particle electrostatic potential and interaction energy are of great importance in the bio-molecular modeling, while
 the traditional methods of their numerical treatment  remain cumbersome and 
 have severe size limitations \cite{Warshel:2006,LuChen:2006}. 

     Our goal here is to attract the attention of the community to  
  benefits of the recent  tensor-based techniques 
  (substantiated  on a solid mathematical background)
  for fast calculation of the electrostatic interaction 
  energy and force field of bio-molecules
 which could be  promising and may lead to significant enhancement of existing numerical schemes.  We aim to present the proof of concept for the new efficient 
 method of electrostatic calculations for energy optimization in many-particle systems.

 In the recent years a noticeable progress has been attained in the numerical modeling of the  collective electrostatic  
 potential in many-particle systems by using the tensor decomposition of the generating Coulomb potential (Newton kernel) 
 \cite{HaKhtens:04I,Khor-book_2018,BKK_RS:18}.  
 In the case of finite lattice-structured systems
 the tensor summation method 
 \cite{VeBoKh_CPC:14,Khor-book_2018}
 essentially outperforms the traditional Ewald-type methods \cite{Ewald:27,Hune_Ewald:99}, since   
 it allows to compute  not only the  interaction  energy, but also the real-space 3D collective  
 electrostatic potential of large $N$-body molecular systems in the whole 
 computational box in the form of its data-sparse parametrization in tensor format. 
  
Rank structured range-separated (RS) tensor format for calculation of the free-space collective 
electrostatic potential for
many-particle systems of general type have been introduced and analyzed in \cite{BKK_RS:18}, where  
it was proven that the rank parameter of the  long-range part of the  potential  
only weakly (logarithmically) depends on the number of particles  in a  molecular system as well as on the approximation accuracy. 
 In this case, the free-space \emph{collective electrostatic potential of large atomic clusters} 
 is  computed by rather simple tensor summations 
 (using shifts and summations of vectors) without the need to solve the elliptic PDEs, as for example, for PBE.  
 Moreover, the RS tensor representation of the electrostatic potential  provides a beneficial 
 scheme for computation of the \emph{electrostatic interaction energy} 
 of a cluster of $N$ charged particles, with the cost that only 
 mildly (logarithmically) depends on the number of particles, $N$.
 The force field can be then easily computed.
  Recently, RS tensor format has been successfully applied to the   tensor decomposition of Dirac delta and elliptic operator inverse \cite{BoKH_Dirac:20} which was the 
background for the new  
regularization scheme of the Poisson--Boltzmann type equations with singular source terms 
\cite{BeKKKS:21,KKKSBen:21}, see also \cite{KwFSBen:22} for an application of reduced basis method in the framework of the new regularization scheme.

 We recall that the tensor numerical methods provide essentially one-dimensional 
 $O(n)$ computational complexity 
 for the three-dimensional problems discretized on $n\times n \times n$ 3D Cartesian 
 grids \cite{Khor-book_2018,Hackbusch:12}, opposite to
  the volume complexity of the order of $O(n^3 \log(n))$ 
 for traditional asymptotically optimal numerical approaches. 
  Tensor numerical methods already proved to be efficient 
 in computational quantum chemistry \cite{khor-ml-2009,Khor_bookQC_2018}, in optimal control problems \cite{Heid:21}, 
 for solution of the 3D elliptic equations in 
 stochastic homogenization \cite{BoVeKh:23}
 and in numerical modeling of many  other real life problems
 \cite{Khor_bookQC_2018,FokPla:12,KhQuant:09,Osel_TT:11,Kilmer:22}.
  Resent hybrid tensor approximation methods based on the combination of Chebyshev polynomial interpolation and Tucker tensor decomposition \cite{BeKhKhBS1:25} paves the way to further enhancement of tensor approximation techniques, see also  \cite{Kress:21}.
 The main focus of tensor calculus is maintaining the low tensor rank; in presented approach the robust control of
 tensor rank is tackled  by   the robust 
  rank-truncation methods using the reduced higher order singular value 
  decomposition (RHOSVD) \cite{khor-ml-2009}.

  In this paper, we present the new approach for computing the electrostatic
  interaction energy in  the framework of rigid-body docking problem for multi-particle systems  via constrained energy minimization.
  It is based on using the RS tensor techniques of $O(n)$ complexity for representation
   of the collective electrostatic potentials on $n\times n\times n $ 
  3D Cartesian grid. This paves the way to fast interaction cross-energy calculation
  for large bio-molecules clusters (electrostatic part of the protein-ligand binding energy) at the cost 
  that is almost independent on the size of molecule, $N$.
  Tensor approach enables computation of the electrostatic potential in the canonical tensor format 
  represented on the large 3D grids of the order of $n^3 \approx 10^{12}$. 
  This property was already gainfully utilized in high-accuracy electronic structure 
 calculations\footnote{High accuracy tensor-based solver for the Hartree-Fock equation
 includes calculation of the nuclear potential energy operator, where the 
 tensor-based representation of the nuclear charges is applied.
 It uses as well  the tensor-based 3D Coulomb potential  
  for the accurate calculation of the 3D convolution operators  with the 
  Newton kernel using grids of size up to $n^3 \approx 10^{14}$ (without high-performance computing).}. 
  Large grid sizes enable fast and accurate electrostatic calculations for large protein complexes.

  Thus, we benefit from the fast grid-based tensor evaluation 
  of the long-range part in the collective electrostatic potential of complex molecular systems. 
  In our numerical experiments we try to find the 
  conforming sites for the ligand by using  only 
  the electrostatic part of the binding energy in estimation for the minimum positions on the potential energy surface.    
  Preventing the possible overlap of protein-ligand clusters in the posing/docking process is controlled  by estimating 
  the van der Waals distance. 
  The acceptable posing positions of ligand could be determined  by the direct local energy optimization 
  in the enveloping strip around the protein combined, if required, with the gradient search based on calculation of the 
  collective force vector for rigid ligand.

  Thus, we summarize the advantages of the RS tensor approach:
    \begin{itemize}
  \item The low-rank parametric representation in the RS tensor format
  of the long-range part in collective electrostatic potential of the  bio-molecule  
  represented on large 3D Cartesian grids of size $n^3 \sim 10^{12}$ (using Matlab on a laptop).
    All tensor operations are performed in  $O(n)$ complexity.
  \item Super-fast electrostatic interaction energy calculations
    in \emph{logarithmic} complexity w.r.t. the number of particles  in the protein, $N$.
   \item An opportunity for fast evaluation of the collective force field for the rigid ligand cluster by using grid-based 
   tensor representation of the multi-particle electrostatic potential 
   (and the related energy).
   \end{itemize}

   We consider two numerical examples, first  with a pair of flat synthetic 
  clusters in a volume, and an example with a small protein containing 379 atoms.
  In the latter example, we extract and separate a small fragment of the protein (a group of connected particles)
  and then, our algorithms make a ``blind search'' of
  the initial location of this fragment (``ligand'')
  using only the electrostatic interaction calculations.
  
  This  ``blind search docking'' process is controlled only by evaluating the electrostatic interaction energy  
  which is computed by using the canonical tensor representation of the 
  electrostatic potential at the position of every particle in traveling ligand.  
  The samples on multidimensional potential energy surface (PES) are computed for azimuthal  
  rotations and translations of the ligand with respect  to the protein. 
  At every step on the ligand trajectory the van der Waals distance from ligand to the protein 
  surface is controlled, thus giving the possibility to trace the cavities and the lumps on the protein surface.
  
  Then, according to the determined minimum(s) on the   rotational/translational  
  PES, we recover the respective admissible docking positions. 
  However, one can omit some unnecessary rotation positions for 
  the ligand, by using a kind of ``learning'' procedure for the potential of the protein at
  a given site. Thus, for example, one can noticeably reduce the combinatorial expense of the 
  deterministic  calculations by selecting only most preferable  
   rotations and translations.

   In numerical techniques based on deterministic/stochastic dynamics the efficient gradient 
(or stochastic gradient) calculations play the central role.
In the framework of docking problems one deals with the dynamics of \emph{rigid clusters of charged particles}.
In this setting the force calculation requires certain modification, i.e. the only collective force vector of the ligand, compared with the case 
of single particles, see \S\ref{ssec:Forces_Applic}. We discuss fast tensor-based 
force calculation for the rigid cluster of charged particles (say, ligand) in Section \ref{ssec:Forces-Cluster}.

The rest of the paper is structured as follows. Section 2 provides the short introduction 
to RS tensor format. In Section 3 we outline the problem setting in the form of constrained 
energy functional minimization 
in multi-parametric configuration space. We propose the special representation of binding energy for cross protein-ligand interactions that allows
the direct use of the low-rank tensor decomposition for electrostatic potential of protein. 
We then present tensor-based method for 
fast collective force calculation and describe several algorithmic schemes to enhance the energy evaluation. 
The rigorous mathematical arguments concerning the numerical complexity of considered computational schemes
are presented.
We also discuss the main details on the energy minimization algorithms for protein-ligand systems 
over multi-parametric PES by using RS tensor format and using the canonical/Tucker low-rank decompositions of multidimensional data.
Numerical tests demonstrate the main features of the presented docking techniques.
Section 4 outlines further prospects for the enhancement and application of the introduced tensor methods.

 \section{Brief introduction to the RS tensor format}
 \label{sec:ten_formats}
 
 First, we recall the grid-based method for the low-rank canonical
representation of a spherically symmetric kernel function $p(\|x\|)$,
$x\in \mathbb{R}^d$ for $d=2,3,\ldots$, by its projection onto the set
of piecewise constant basis functions, see \cite{BeHaKh:08,Khor-book_2018,Khor_bookQC_2018} for the case of
the Newton kernel $p(\|x\|)=\frac{1}{\|x\|}$,  for $x\in \mathbb{R}^3$.
A single reference potential like $1/\|x\|$ can be represented on a fine 3D 
$n\times n\times n$ Cartesian grid as a low-rank canonical tensor \cite{HaKhtens:04I,BeHaKh:08}.

In the computational box  $\Omega=[-b,b]^3$, let us introduce the uniform $n \times n \times n$ 
rectangular Cartesian grid $\Omega_{n}$ with mesh size $h=2b/n$ ($n$ even).
Let $\{ \psi_\textbf{i}\}$ be a set of tensor-product piecewise constant basis functions,
$  \psi_\textbf{i}(\textbf{x})=\prod_{\ell=1}^3 \psi_{i_\ell}^{(\ell)}(x_\ell)$,
for the $3$-tuple index ${\bf i}=(i_1,i_2,i_3)$, $i_\ell \in I_\ell=\{1,...,n\}$, $\ell=1,\, 2,\, 3 $.
The generating radial kernel function $p(\|x\|)$ is discretized by its projection onto the basis
set $\{ \psi_\textbf{i}\}$
in the form of a third order tensor of size $n\times n \times n$, defined entry-wise as
\begin{equation}  \label{galten}
\mathbf{P}:=[p_{\bf i}] \in \mathbb{R}^{n\times n \times n}, \quad
 p_{\bf i} =
\int_{\mathbb{R}^3} \psi_{\bf i} ({x}) p(\|{x}\|) \,\, \mathrm{d}{x}.
\end{equation}

Using the Laplace-Gauss transform for analytic function $p(z)$, 
and then applying the sinc-quadrature approximation to the corresponding integral representation
on the real axis,
we obtain the approximation to the radial function $p(z)= p(\|{x}\|)$
by a sum of Gaussians \cite{Braess:95,Stenger,HaKhtens:04I},
\begin{equation}  \label{eqn:sinc}
 p(z)=  \frac{2}{\sqrt{\pi}}\int_{\mathbb{R}_+} \tilde{p}(t) e^{- t^2 z^2 } dt  
  \approx 
\sum_{k=-K}^{K} a_k e^{- t_k^2\|{x}\|^2}= 
\sum_{k=-K}^{K} a_k  \prod_{\ell=1}^d e^{-t_k^2 x_\ell^2},
 \end{equation}
  where $a_k=\frac{2}{\sqrt{\pi}}\tilde{p}(t_k)$ with the sinc quadrature points $t_k$.
 Here each Gaussian term is a separable function.
For certain class of analytic functions $p(z)$  
the exponentially fast in $K$ convergence on the real axis $z=\|x\|$ can be proven,
\[
0< h \leq \|{x}\|: \quad
\left|p(\|{x}\|) - \sum_{k=-K}^{K} a_k \prod_{\ell=1}^d e^{-t_k^2 x_\ell^2}\right|  
\le \frac{C}{h}\, \displaystyle{e}^{-\beta \sqrt{K}},  
\quad \text{with} \, C,\beta >0,
\]
for the proper choice of the quadrature points $t_k\in \mathbb{R}$, and weights $a_k >0$.
In particular, this applies to the case $p(z)=\frac{1}{z}, z=\|x\|$, where the Gaussian integral representation 
takes a form 
$ 1/\|{x}\| =  \frac{2}{\sqrt{\pi}}\int_{\mathbb{R}_+}  e^{- \|x\|^2 t^2  } dt, \,\, \|x\|>0 $.

As result, the $3$rd order tensor $\mathbf{P}$ can be approximated by
the $R$-term   canonical representation
(see \cite{HaKhtens:04I,BeHaKh:08,VeBoKh_CPC:14} for details),
\begin{equation} \label{eqn:sinc_general}
    \mathbf{P} \approx  \mathbf{P}_R =
  \sum\limits_{k=1}^{R} {\bf p}^{(1)}_k \otimes {\bf p}^{(2)}_k \otimes {\bf p}^{(3)}_k
\in \mathbb{R}^{n\times n \times n}, 
\end{equation}
where ${\bf p}^{(\ell)}_k \in \mathbb{R}^n$ and the rank parameter $R\leq 2K+1$ scales logarithmically in 
the accuracy $\varepsilon>0$, $R=O(|\log \varepsilon |^2)$.
Thus, a spherically symmetric kernel function $p(\|x\|)= \frac{1}{\|x\|}$, $x\in \mathbb{R}^d$ for $d=3$,   
representing the Newton potential, is parametrized  on the grid by the canonical 
tensor $\mathbf{P}_R$ with a certain accuracy $\varepsilon$. 
This representation have been used in many applications of tensor numerical methods  in computational 
quantum chemistry \cite{Khor_bookQC_2018}.

 The range separated (RS) tensor format     introduced in 
 \cite{BKK_RS:18} is well suited for modeling of
 the long-range interaction potentials of multi-particle systems of general type. 
  It is based on the partitioning of the reference tensor representation for the Newton kernel  
  into long- and short-range parts.

According to the canonical rank-$R$ tensor representation of the Newton kernel (\ref{eqn:sinc_general}) 
as a sum of Gaussians, 
and following the construction of the range-separated 
decomposition of the reference kernel \cite{BKK_RS:18}, 
one can recognize the short- and long-range parts in $\mathbf{P}_R$,
by distinguishing their effective supports  
\[
 \mathbf{P}_R = \mathbf{P}_{R_s} + \mathbf{P}_{R_l},
\]
with
\begin{equation} \label{eqn:Split_Tens}
    \mathbf{P}_{R_s} =
\sum\limits_{k=1}^{R_s} {\bf p}^{(1)}_k \otimes {\bf p}^{(2)}_k \otimes {\bf p}^{(3)}_k, 
\quad \mathbf{P}_{R_l} =
\sum\limits_{k=R_s+1}^R {\bf p}^{(1)}_k \otimes {\bf p}^{(2)}_k \otimes {\bf p}^{(3)}_k,
\end{equation}
where $R_s$ and $R_l$ denote the canonical ranks of the short- and long-range components in the total 
potential $\mathbf{P}_R$, respectively.
Figure \ref{fig:Vec_Newton} shows the form  of Gaussians in the long-range and the short-range parts of the tensor representation of the Newton kernel 
in x-axis.
\begin{figure}[tbh]
\centering
\includegraphics[width=7.0cm]{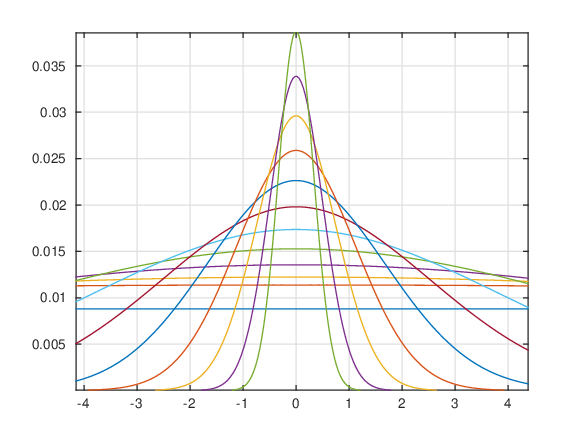}
\includegraphics[width=7.1cm]{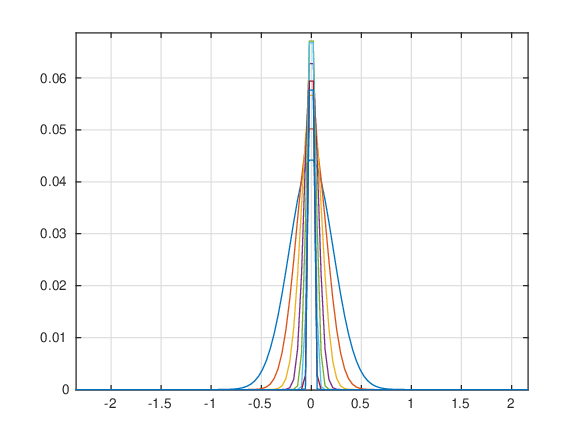}
\caption{\small The long-range (left) and the short-range (right) parts of the tensor representation of the Newton kernel in x-axis.}
\label{fig:Vec_Newton}
\end{figure}

Given a reference tensor 
$\widetilde{\bf P}_R\in \mathbb{R}^{2n \times 2n \times 2n}$, 
defined on a twice larger grid, 
then the collective electrostatic potential ${\bf P}_0\in \mathbb{R}^{n \times n \times n}$ is computed as a
  a weighted sum of the single Newton kernels $\widetilde{\bf P}_R$ placed at the points 
  of the coordinates of $N$ charged particles, $x_\nu$, $\nu=1,\ldots ,N$.  
Specifically, it is constructed by a direct sum of shift-and-windowing transforms of the reference 
tensor $\widetilde{\bf P}_R$ (see \cite{VeBoKh_CPC:14} for more details),
\begin{equation}\label{eqn:Total_Sum}
 {\bf P}_0 = \sum_{\nu=1}^{N} {z_\nu}\, {\cal W}_\nu (\widetilde{\bf P}_R)=
 \sum_{\nu=1}^{N} {z_\nu} \, {\cal W}_\nu (\widetilde{\mathbf{P}}_{R_s} + \widetilde{\mathbf{P}}_{R_l})
 =: {\bf P}_s + {\bf P}_l.
\end{equation}
The shift-and-windowing transform ${\cal W}_\nu$ maps a reference tensor 
$\widetilde{\bf P}_R\in \mathbb{R}^{2n \times 2n \times 2n}$ onto its sub-tensor 
of smaller size $n \times n \times n$, obtained by first shifting the center of
the reference tensor $\widetilde{\bf P}_R$ to the grid-point $x_\nu$ and then restricting 
(windowing) the result onto the computational grid $\Omega_n$.
  
The difficulty of the tensor representation   (\ref{eqn:Total_Sum})  is
  that the number of terms in the canonical representation of the full tensor sum ${\bf P}_0$
increases merely proportionally to the number $N$ of particles in the system with the multiple $R_l$.

This problem is solved in \cite{BKK_RS:18} by considering  the global tensor 
decomposition of only the  "long-range part" in the tensor ${\bf P}_0$, defined by
\begin{equation}\label{eqn:Long-Range_Sum} 
 {\bf P}_l = \sum_{\nu=1}^{N} {z_\nu} \, {\cal W}_\nu (\widetilde{\mathbf{P}}_{R_l})=
 \sum_{\nu=1}^{N} {z_\nu} \, {\cal W}_\nu 
 (\sum\limits_{k= 1}^R \widetilde{\bf p}^{(1)}_k \otimes \widetilde{\bf p}^{(2)}_k 
 \otimes \widetilde{\bf p}^{(3)}_k).
 \end{equation}
 For tensor representation of the short-range part, ${\bf P}_s$, a sum of cumulative 
 tensors of small support (and small size) is used, accomplished by the list of weights and the 3D  
 coordinates of centers, $x_\nu$, of the single-particle potentials.
 
 In what follows, we recall the definition of the RS tensor format.
 \begin{definition}\label{Def:RS-Can_format} (RS-canonical tensors \cite{BKK_RS:18}). 
 Given the separation parameter $\sigma >0$,  the reference tensor ${\bf A}_0$, 
  where $\mbox{rank}({\bf A}_0)\leq R_0$ and $\mbox{diam}(\mbox{supp}{\bf A}_0)\leq 2 \sigma$,
 and a set of points $x_\nu \in \mathbb{R}^{d}$, and  weights $c_\nu$, $\nu=1,\ldots,N$.
 Then the RS-canonical tensor format specifies the class of $d$-tensors 
 ${\bf A}  \in \mathbb{R}^{n_1\times \cdots \times n_d}$
 which can be represented as a sum of a rank-${R}_L$ canonical tensor  
 \[
{\bf A}_{R_L} = {\sum}_{k =1}^{R_L} \xi_k {\bf a}_k^{(1)} \otimes \cdots \otimes {\bf a}_k^{(d)}
\in \mathbb{R}^{n_1\times ... \times n_d}
\]
and a cumulated canonical tensor 
 \[
 \widehat{\bf A}_S={\sum}_{\nu =1}^{N} c_\nu {\bf A}_\nu ,   
\]
generated by replication of the reference tensor ${\bf A}_0$ to the points $x_\nu$, 
${\bf A}_\nu= \mbox{Replica}_{x_\nu}({\bf A}_0)$.
Then the RS canonical tensor is presented by
\begin{equation}\label{eqn:RS_Can}
 {\bf A} =  {\bf A}_{R_L} + \widehat{\bf A}_S=
 {\sum}_{k =1}^{R_L} \xi_k {\bf a}_k^{(1)}  \otimes \cdots \otimes {\bf a}_k^{(d)} +
 {\sum}_{\nu =1}^{N} c_\nu {\bf A}_\nu. 
\end{equation}
\end{definition}

\begin{figure}[tbh]
\centering
\includegraphics[width=5.5cm]{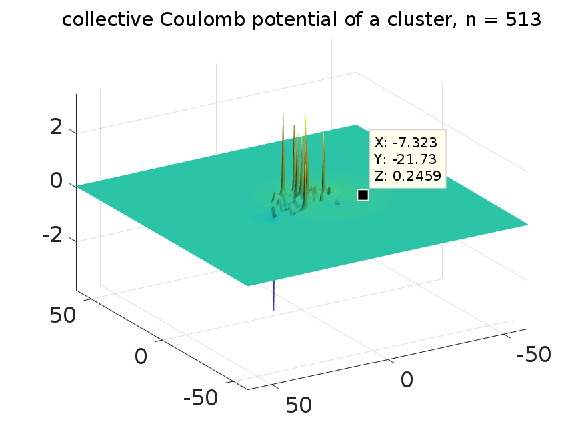}
\includegraphics[width=5.3cm]{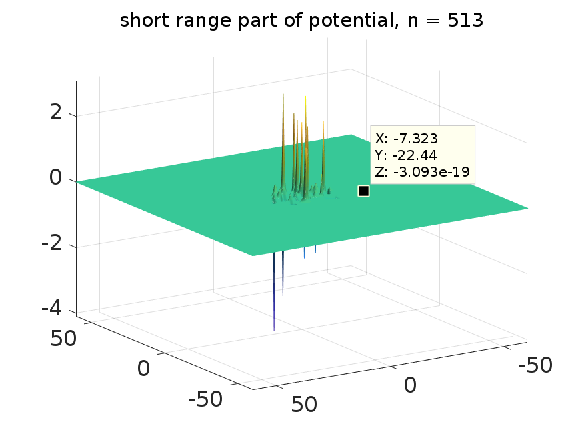}
\includegraphics[width=5.2cm]{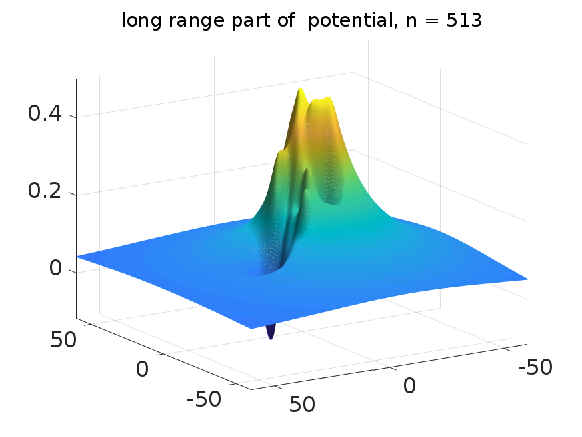}
\caption{\small
The collective electrostatic potential of a cluster with 782
charged particles (left), its short-range (middle) and the long-range (right) parts. }
\label{fig:Prot_long_1228}
\end{figure}
 Figure \ref{fig:Prot_long_1228} demonstrates the free space
 collective electrostatic potential of a cluster with 782
charged particles, and its short and long-range parts (see also \cite{BKK_RS:18})
 computed on the $n\times n\times n$ 3D Cartesian grid with $n=513$.

The storage size for  the  RS-canonical tensor ${\bf A}$ in (\ref{eqn:RS_Can}) 
is estimated by (\cite{BKK_RS:18}, Lemma 3.9),
$$
\mbox{stor}({\bf A})\leq d R_L n + (d+1)N + d n_\sigma R_0, \quad h n_\sigma \approx \sigma.
$$
Here $n_\sigma$ denotes the number of grid points in the interval $[0,\sigma]$ and $h>0$ is the mesh size.
Notice that in applications to numerical evaluation of electrostatic potential for bio-molecules
the constant $\sigma$ can be viewed as the van der Waals distance.

Denote by $\overline{\bf a}^{(\ell)}_{i_\ell}\in \mathbb{R}^{1\times R_L}$ the  
row-vector  with index $i_\ell$ in the side matrix $A^{(\ell)}\in \mathbb{R}^{n_\ell \times R_L}$
of ${\bf A}_{R_L}$, and by $\xi=(\xi_1,\ldots,\xi_d)$ the coefficient vector in (\ref{eqn:RS_Can}).
Then the ${\bf i}$-th entry of the RS-canonical tensor ${\bf A}=[a_{\bf i}]$ can be calculated 
as a sum of long- and short-range contributions, 
\[
 a_{\bf i}= \left(\odot_{\ell=1}^d \overline{\bf a}^{(\ell)}_{i_\ell} \right) \xi^T +
 \sum_{\nu: \, {\bf i} \in \mbox{\footnotesize supp}{\bf A}_\nu } c_\nu {\bf A}_\nu({\bf i}),
 \quad \mbox{at the expense}\quad O(d R_L + 2 d n_\sigma R_0).
\]
 
 In application to the calculation of multi-particle interaction potentials  discussed above
 we associate the tensors ${\bf P}_s$ and ${\bf P}_l$ in (\ref{eqn:Total_Sum}) 
 with short- and long-range components ${\bf A}_{R_L}$ and  $\widehat{\bf A}_S$ in the RS 
 representation of the collective electrostatic potential ${\bf P}_0$.
 The following theorem proofs the almost uniform in $N$ bound on the Tucker (and canonical) 
 rank of the tensor ${\bf A}_{R_L}$.
  \begin{theorem}\label{thm:Rank_LongRange}
 (Uniform rank bounds for the long-range part \cite{BKK_RS:18}).
Let the long-range part ${\bf P}_l$ in the total interaction potential, see (\ref{eqn:Long-Range_Sum}),
correspond to the sinc-approximation for generating radial function $p(\|x\|)$ with $K=O(\log^2\varepsilon)$, see (\ref{eqn:sinc}).
Then the total $\varepsilon$-rank ${\bf r}_0$ of the Tucker approximation to the canonical tensor sum ${\bf P}_l$
is bounded by
\begin{equation}\label{eqn:Rank_LongR}
 |{\bf r}_0|:=rank_{Tuck}({\bf P}_l) \leq C\, b \,\log^{3/2} (|\log (\varepsilon/N)|),
\end{equation}
where the constant $C$ does not depend on the number of particles $N$.
\end{theorem}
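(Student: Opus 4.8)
\noindent\emph{Proof proposal.}
The plan is to reduce the Tucker-rank estimate for the canonical tensor sum $\mathbf{P}_l$ to three one-dimensional $\delta$-rank estimates for its factor (side) matrices, and then to exploit the analyticity and slow variation of the long-range Gaussians. Write $\mathbf{P}_l=\sum_{m} c_m\,\mathbf{u}_m^{(1)}\otimes\mathbf{u}_m^{(2)}\otimes\mathbf{u}_m^{(3)}$, where the multi-index $m=(k,\nu)$ runs over the $R_l$ long-range sinc terms and the $N$ particle positions, $c_m=a_k z_\nu$, and $\mathbf{u}_{(k,\nu)}^{(\ell)}\in\mathbb{R}^n$ is the grid projection of the shifted one-dimensional Gaussian $x_\ell\mapsto e^{-t_k^2(x_\ell-x_{\nu,\ell})^2}$ windowed to $[-b,b]$. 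By the reduced HOSVD estimate of \cite{khor-ml-2009}, the error of the Tucker approximation obtained by retaining, in each mode $\ell$, only the singular vectors of the side matrix $U^{(\ell)}=[\mathbf{u}_m^{(\ell)}]$ with singular value exceeding $\delta$ is controlled by the discarded singular-value tails of the three matrices $U^{(\ell)}$, scaled by $\|c\|_\infty$ and a mild factor in $R_l N$. Since $\|c\|_\infty$ is bounded and $R_l N=N\cdot\mathrm{polylog}(1/\varepsilon)$, it suffices to bound, for each $\ell$, the $\delta$-rank of $U^{(\ell)}$ for a $\delta$ that is polynomially small in $\varepsilon/N$; all three modes are identical by symmetry.

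Second, I would estimate the $\delta$-rank of $U^{(\ell)}$, i.e.\ the numerical rank of the two-parameter family $\{e^{-t^2(x-c)^2}:\,t\le\tau,\ c\in[-b,b]\}$ restricted to $x\in[-b,b]$, where $\tau\asymp\sigma^{-1}$ is the cut-off separating the short- and long-range sinc nodes. The key point is that every such Gaussian, together with its dependence on the shift $c$, extends analytically to a complex strip whose half-width is bounded below by a fixed multiple of $\sigma$, uniformly in $t\le\tau$ and $c\in[-b,b]$: the long-range (wide) Gaussians are essentially ``flat'' on the computational box. Using the factorisation $e^{-t^2(x-c)^2}=e^{-t^2x^2}\,e^{2t^2xc}\,e^{-t^2c^2}$ and a truncated power-series expansion of the bilinear factor $e^{2t^2xc}$, combined with the geometric spacing $t_k=e^{k h_0}$ of the sinc nodes (which makes both $\sum_{k}t_k$ over the long-range range and the number of relevant nodes controllable in terms of $|\log(\varepsilon/N)|$), one obtains a separable approximation of the whole family, valid to accuracy $\delta$, with a number of terms growing only like $C\,b\,\log^{3/2}(|\log\delta|)$. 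Inserting $\delta\asymp\varepsilon/N$ and using $|\log(\varepsilon/N)|=|\log\varepsilon|+\log N$ yields $|\mathbf{r}_0|\le C\,b\,\log^{3/2}(|\log(\varepsilon/N)|)$ with $C=C(\sigma)$ independent of $N$.

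I expect the technical heart, and the main obstacle, to be exactly this last $\delta$-rank estimate for the shifted-Gaussian family, and in particular forcing the dependence on the number of copies to enter only through a nested logarithm. A naive count of the $R_l N$ rank-one terms, or a term-by-term treatment of the $R_l$ sinc Gaussians followed by summation, produces bounds that are polynomial in $|\log(\varepsilon/N)|$ rather than $\log^{3/2}$ of it; the improvement comes from the fact that the column spaces $\mathrm{span}\{e^{-t_k^2(\cdot-c)^2}:c\in[-b,b]\}$ for different $t_k\le\tau$ overlap strongly, so one must treat all sinc nodes jointly and marry the Bernstein-type analyticity estimate with the sharp sinc-quadrature error analysis of \cite{HaKhtens:04I,BKK_RS:18,Khor-book_2018} rather than invoking either in isolation. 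The remaining ingredients — the RHOSVD bookkeeping and the passage $\varepsilon\mapsto\varepsilon/N$ that absorbs the $N$-fold summation — are routine once the one-dimensional estimate is available. As the statement is quoted from \cite{BKK_RS:18}, I would ultimately present the argument as a specialisation of the construction there, supplying the RHOSVD reduction and the uniform analyticity bound as the two load-bearing steps.
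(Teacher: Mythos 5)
The first thing to note is that the paper contains no proof of this statement: Theorem \ref{thm:Rank_LongRange} is imported verbatim from \cite{BKK_RS:18}, no proof environment follows it, and the text moves straight on to applications, so there is no in-paper argument to compare against. Judged on its own terms, your architecture --- writing ${\bf P}_l$ as a canonical sum over the pairs $(k,\nu)$, using the RHOSVD error bound to reduce the Tucker-rank question to the $\delta$-ranks of the three side matrices with per-term accuracy $\delta\asymp\varepsilon/N$, and then bounding the $\delta$-rank of the one-dimensional shifted-Gaussian family by exploiting the flatness of the long-range Gaussians on $[-b,b]$ --- is the right skeleton and is consistent with how the cited reference argues.

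The genuine gap sits exactly at the step you yourself flag as the technical heart. The only concrete mechanism you supply for the one-dimensional estimate is the factorisation $e^{-t^2(x-c)^2}=e^{-t^2x^2}\,e^{2t^2xc}\,e^{-t^2c^2}$ together with a truncated power series of the bilinear factor; for $t\le\tau\asymp\sigma^{-1}$ and $x,c\in[-b,b]$ this yields a separation rank of order $\tau^2b^2+|\log\delta|$, which is quadratic in $b$ and only singly logarithmic in $N/\varepsilon$. That reproduces neither the linear $b$-dependence nor the doubly logarithmic quantity $\log^{3/2}(|\log(\varepsilon/N)|)$ appearing in (\ref{eqn:Rank_LongR}) --- a target that, as printed, is far stronger than anything a term-count of a power series can deliver. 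You acknowledge the mismatch, but resolve it only by asserting that one must ``treat all sinc nodes jointly and marry the Bernstein-type analyticity estimate with the sharp sinc-quadrature error analysis,'' without specifying how that joint treatment manufactures the stated exponent and the nested logarithm. Since producing precisely that bound \emph{is} the content of the theorem (the RHOSVD bookkeeping and the $\varepsilon\mapsto\varepsilon/N$ rescaling are routine, as you say), the proposal is a plausible plan rather than a proof: the load-bearing one-dimensional $\delta$-rank lemma is cited into existence rather than established, and the elementary substitute you sketch provably falls short of the claimed rate.
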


RS tensors provide the numerically efficient tool in many applications, for example for modeling the electrostatics of
many-particle systems of general type, or for modeling the scattered multidimensional data
by using radial basis functions, see examples in \cite{BKK_RS:18}.

\section{Fast energy calculation in posing process via the RS tensor decomposition of electrostatic potential}
 \label{sec:Problem_Docking}

 \subsection{Energy computation for clustered many-particle systems}   
 \label{ssec:Basics_Electrostatics}
 
The numerical simulation of the  space evolution for large many-particle systems is 
a commonly used tool 
in analysis of problems in the charged many-particle dynamics, docking (say, protein-ligand docking), 
identification and pattern matching of bio-molecules, as well in function prediction and drug discovery.
The numerical analysis can be based on the multiple evaluation of the binding energy 
for constrained minimization of the energy functional for the interacting particle system,
imposing the control of local geometric features. The latter requires the detailed learning 
of the complex geometric configurations in $\mathbb{R}^3$.

The time consuming energy evaluation (and the corresponding forces) 
should be performed many times in the course of corresponding 
optimization procedures in the multi-dimensional configuration space of possible rotations and translations 
parameterizing  the admissible transformations of a system. In the case of bio-molecular systems
in the presence of polarization effect, the complete analysis of the energy landscape  
requires the solution of the 3D Poisson-Boltzmann equation (PBE)
for each fixed point of interest in the multi-parametric configuration space. From computational 
point of view, this 
straightforward approach does not seem to be tractable even for moderate size system.

Recall that the collective electrostatic potential generated by 
the system of $N$ distinct charged particles with charges $z_k$ (which can be either positive or negative)
located at positions ${x}_{k}\in \mathbb{R}^3$ ($k=1,...,N$) is defined by 
\[
 P_0(x):= \sum\limits_{{k}=1}^N z_{k}p(\|{x} - {x}_{k}\|), \quad x\in \mathbb{R}^3,
\]
where the generating radial kernel function $p(r)=p(\|{x}\|)$ specifies the potential of the single particle with unit charge.
Here $r= \|{x}\|$ is the Euclidean norm of the vector $x\in \mathbb{R}^3$.
In this concern the free space electrostatic potential of the unit charge located at point $x_k\in \mathbb{R}^3$ 
is defined by the classical Newton kernel, $p(r)=1/r$,  i.e.
\begin{equation}\label{eqn:Newt_kern}
 p(\|{x} - {x}_{k}\|)= \frac{1 }{\|{x} - {x}_{k}\|}.
\end{equation}

The interaction energy of the system of charged particles ${\cal X}={\cal X}\{x_1,\ldots,x_N\}$ is 
defined by the weighted sum 
\begin{equation}\label{eqn:EnergyLatSum}
E_N =E_N(x_1,\dots,x_N)= \frac{1}{2} \sum\limits_{{j}=1}^N z_{j}
\sum\limits_{{k}=1, {k}\neq {j}}^N \frac{z_{k} }{\|{x}_j - {x}_{k}\|}. 
\end{equation}
Given the electrostatic potential $P_0(x)$,
the energy expression (\ref{eqn:EnergyLatSum}) can be rewritten in terms of the restricted (induced) 
non-singular potentials $P_{-j}(x)$, $ j=1,\ldots,N$, as follows
\begin{equation}\label{eqn:EnergybyPotential}
 E_N =\frac{1}{2} \sum\limits_{{j}=1}^N z_{j} P_{-j}(x_j),
\end{equation}
where $P_{-j}(x)$ given by 
\begin{equation}\label{eqn:Potentialminj}
 P_{-j}(x):= \sum\limits_{{k}=1, {k}\neq {j}}^N \frac{z_{k} }{\|{x} - {x}_{k}\|} 
 = P_0(x)- \frac{z_j}{\|{x} - {x}_{j}\|},
\end{equation}
defines the collective interaction potential generated by the reduced set of particles  
$
\{{\cal X} \setminus x_j\}, \quad j=1,\ldots,N.
$
In the case of long-range potentials, the 
straightforward calculation by (\ref{eqn:EnergyLatSum}) scales quadratically in the number of particles, 
$O(N^2)$.

The non-extensive numerical methods of $O(N)$ complexity for calculation of the interaction  energy (IE) for 
a charged multi-particle system given by (\ref{eqn:EnergybyPotential}) can be based on using the RS tensor format 
\cite{BKK_RS:18}, see \S\ref{sec:ten_formats}. 
Numerical schemes for evaluation of the potential energy (and force field) in the tensor formats will be discussed 
in \S\ref{ssec:Energy_RStensor}, \ref{ssec:Forces_Applic}. 
In the case of lattice-structured systems, 
the fast tensor-based computation scheme for IE is presented in \cite{Khor_bookQC_2018}.
In what follows,  
we describe the fast energy calculation  algorithm based on the RS tensor techniques,
that scales almost linearly in the number of particles, $O(N \log^q N)$.

\begin{remark} \label{rem:sigma-separable}
In this paper, we consider the $\sigma$-separable systems in the sense
that the minimal physical distance between the centers of particles is greater or equal 
than the fixed constant $\sigma >0$.
The double sum in (\ref{eqn:EnergyLatSum}) applies
only to the particle positions $\|{x}_{j} - {x}_{k}\|\geq \sigma $, hence, the quantity in 
(\ref{eqn:EnergyLatSum}) is correctly defined also for singular kernels 
like $p(r)=1/r^\alpha$, $\alpha >0$. 
\end{remark}
Recall that in bio-molecular electrostatic calculations the constant $\sigma >0$ is related to the so-called
van der Waals distance.

\begin{figure}[htb]
\centering
\includegraphics[width=4.0cm]{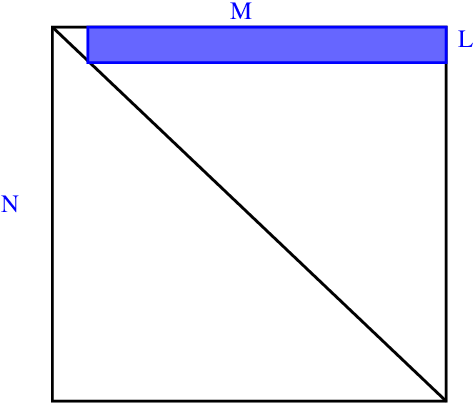}
\caption{\small Scheme of active summation indexes  in (\ref{eqn:EnergyLatSum_mixed}), (blue rectangle), 
where $N=M+L$.}
\label{fig:Summation_ind}
\end{figure}

Assume that the $\sigma$-separable system of charged particles  of interest, ${\cal X}$, is composed of two disjoint 
(non-overlapping) subsets, ${\cal X}= {\cal M} \cup {\cal L} $, and introduce notations
${\cal M}=\{x_m \}_{m=1}^M$, ${\cal L}=\{x_\ell \}_{\ell=1}^L$, with $N=M+L$.
 Then the total interaction energy defined by (\ref{eqn:EnergyLatSum}),
is represented by a sum of three different portions of  energy,
 \[
  E_N({\cal X})= E_M({\cal M}) + E_L({\cal L}) + E_N({\cal M},{\cal L}),
 \]
representing the respective energy of the subsystems ${\cal M}$ and ${\cal L}$,   as well as 
the interaction energy between these two disjoint subsystems (binding energy) defined by the weighted sum, 
\begin{equation}\label{eqn:EnergyLatSum_mixed}
E_N({\cal M},{\cal L})= \sum\limits_{{m}=1}^{M_{\cal M}} z_{m}
\sum\limits_{{\ell}=1}^{L_{\cal L}} \frac{z_{\ell} }{\|{x}_{m} - {x}_{\ell}\|} \equiv
\sum\limits_{{\ell}=1}^{L_{\cal L}} z_{\ell} \sum\limits_{{m}=1}^{M_{\cal M}} 
\frac{z_{m} }{\|{x}_{m} - {x}_{\ell}\|},
\end{equation}
where
\[
  M_{\cal M}=M:=\# {\cal M} \gg L_{\cal L}=L:= \# {\cal L}, \quad N=M+L.
 \]
Notice that in the course of 
dynamical process for optimization of the collective configuration in the rigid docking 
  only the interaction energy $E_N({\cal M},{\cal L})$ is changing and, hence, it should be updated after each 
geometric transformation of ligand ${\cal L}$. 

This allows to 
reduce the numerical costs from quadratic, $O(N^2)$, 
to linear, $O(M_{\cal M} L_{\cal L})$,
scaling in $M_{\cal M}$, taking into account that $M_{\cal M} \gg L_{\cal L}$, 
see Figure \ref{fig:Summation_ind} where 
the scheme of active set of summation indexes  of size $M\times L$ is presented. 
Indeed, computation of the full interaction energy for the cluster ${\cal X}= {\cal M} \cup {\cal L} $
requires evaluation of the potential over the indexes in the upper triangle 
of the $(M+L)\times (M +L)$ matrix, while 
in the case of two disjoint molecules the active summation index runs only over the small 
$M\times L$ rectangular subset.

Enhancing  the energy calculation by application of low-rank tensor decompositions requires
the representation of (\ref{eqn:EnergyLatSum_mixed}) in terms of the electrostatic potential of the system
similar to  (\ref{eqn:EnergybyPotential}). To that end we split the total electrostatic potential 
for two-cluster system into the sum of individual potentials for two subsystems as follows,
\begin{equation}\label{eqn:ElectrStat_pot}
 P_0(x):= \sum\limits_{{k}=1}^N \frac{z_{k}}{\|{x} - {x}_{k}\|}= P_{\cal M}(x) + P_{\cal L}(x) :=
 \sum\limits_{{m}=1}^M \frac{z_{m}}{\|{x} - {x}_{m}\|}+\sum\limits_{{\ell}=1}^L \frac{z_{\ell}}{\|{x} - {x}_{\ell}\|} , 
\end{equation}
with $x\in \mathbb{R}^3$, where the first sum on the right-hand side of (\ref{eqn:ElectrStat_pot}) 
runs over the particles in the bio-molecule $\cal M$, 
while the second one is taken over the atomic centers in ligand $\cal L$. 

Based on above energy splitting, the representation (\ref{eqn:EnergyLatSum_mixed}) 
for the cross-interaction (binding) energy 
for a system of two disjoint molecules takes one of the two equivalent forms
\begin{equation}\label{eqn:EnergySum_partial}
E_N({\cal M},{\cal L})=\sum\limits_{{\ell}=1}^{L_{\cal L}} z_\ell P_{\cal M}(x_\ell), 
\quad  (\, = \sum\limits_{{m}=1}^{M_{\cal M}} z_{m} P_{\cal L}(x_m)\,).
\end{equation}
It is worth to notice that numerical evaluation by both representations has the same asymptotic complexity,
$c_0 M_{\cal M}L_{\cal L}$, where the constant $c_0>0$ denotes the cost for calculation of the single Newton kernel
at each spacial point on the optimization trajectory in $\mathbb{R}^3$.

In the course of protein-ligand potential energy minimization the energy functional 
should be calculated many hundred if not 
thousand times in the case of large bio-molecules with thousands of atoms. 
This may lead to very high computational costs thus limiting the scope of target problems.

The question is how to reduce asymptotic cost of energy calculation in the perspective 
of large bio-molecule size $M \gg L$. 
In what follows, we propose the tensor based techniques that allow
to reduce the initial numerical cost $c_0 M_{\cal M}L_{\cal L}$ to the value $R\, L_{\cal L}$. 
Here $R=R_\Omega$ is the canonical rank of the 
CP tensor decomposition of the long-range part in the electrostatic potential ${\bf P}_{\cal M}$
discretized on the fine Cartesian grid in computational box $\Omega$. 
The CP rank is reduced to much smaller value $R_\Pi \ll R_\Omega$ for the long-range potential restricted onto the 
accompanying  minimal bounding box $\Pi:=[0,B]^{3}$ (moving $3$-hedron), see Table \ref{Tab:Times_ranks_3D},
that includes the small ligand  molecule ${\cal L}\subset \Pi$ at spacial positions of interest, 
see Figure \ref{fig:Bound_box}.  
\begin{figure}[htb]
\centering
\includegraphics[width=7.3cm]{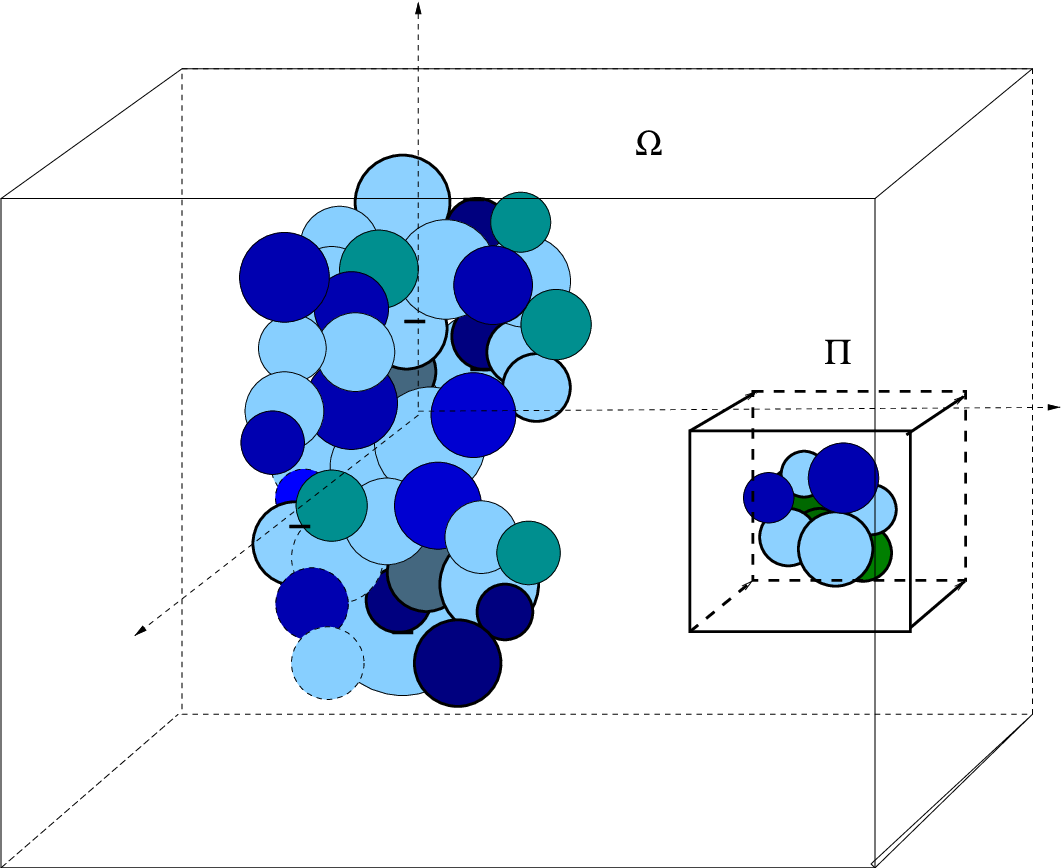}
\caption{\small Bounding box $\Pi$ for the small ligand and the containing hypercube $\Omega \supset \Pi$.}
\label{fig:Bound_box}
\end{figure} 

The mathematical background of this new approach is based on the rigorous analysis of the RS canonical tensor 
decomposition for many-particle electrostatic potentials, see \cite{BKK_RS:18}.
The main result in \cite{BKK_RS:18} proves a CP approximation for the long-range part in the potential, 
with the moderate rank parameter $R=R_\Omega$ that remains to be nearly independent of the number of 
  particles in a molecule, $M=M_{\cal M}$. Consequently, we arrive at the beneficial complexity estimate
  $Q(E_N)$
  for calculation of the electrostatic potential energy by (\ref{eqn:EnergySum_partial}), 
  $E_N({\cal M},{\cal L})$,
  $$
 Q(E_N) = R\, L_{\cal L} \ll c_0 M_{\cal M}L_{\cal L},\quad \mbox{where} \; R\ll M_{\cal M},
  $$ 
  at the limit of large $M_{\cal M}$.

  There are several possibilities to realize the above mentioned arguments, such that the final choice may depend 
  on the particular problem setting, required numerical precision, the computational budget and the amount of
  precomputed data. The respective computational schemes can be outlined as follows:
  \begin{itemize}
   \item[(A)] Precompute the $R_\Omega$-term tensor approximation of the long-range part 
   in the potential ${\bf P}_{\cal M}$ discretized in the bounding computational 
   hypercube $\Omega \supset {\cal M}\cup {\cal L}$, 
   and calculate its values at the respective locations of atoms in the target positions of moving ligand.
   \item[(B)] Compute the $R_\Pi$-term CP approximation to the potential $(P_{\cal M})_{| \Pi}$ that is
   the restriction of $P_{\cal M}$ onto the small box $\Pi \supset {\cal L}$, and then calculate the 
   electrostatic potential energy at the cost $Q(E_N)=R_\Pi \, L_{\cal L}$, where $R_\Pi \ll R_\Omega$.
   In this concern the external to ${\cal M}$ bounding box $\Pi$ might be large enough 
      such that the corresponding 
      low-rank representation could be used for many positions of ligand on the chosen trajectory. 
   \item[(C)] Hybrid algorithm: Once apply item (A) and then reduce the rank of the global $R_\Omega$-term CP 
   representation 
      to the small value $R_\Pi \ll R_\Omega$ in every bonding box $\Pi$ accompanying the 
      trajectory of moving ligand ${\cal L}$.
  \end{itemize}
  Each of numerical schemes (A) -- (C) has its own benefits and disadvantages. 
  For example, scheme (B) is prior to (A) in the case of large bio-molecules.
  Here we notice that the numerical cost for precomputing 
the low-rank tensor decomposition of the long-range part in the potential $P_{\cal M}$ scales 
linearly in the number of atoms in 
the protein, $M_{\cal M}$, and should be performed only once prior to implementing the posing/docking process.

  \begin{table}[htb]
\begin{center}\footnotesize
\begin{tabular}
[c]{|r|r|r|r|}%
\hline
cluster size & full CP rank  &  RS CP rank in $\Omega$&  RS CP rank in $\Pi$\\
\hline
16      &  224   &  51     &  4 \\
\hline
64     &   896   & 109      &  8 \\
\hline
100    &  1400   & 111     &  9 \\
\hline
256    &  3584  & 172     &  9 \\
\hline
400    &  5600  & 152 &  9 \\
\hline
676    &  9464  & 166  &  9 \\
\hline
\end{tabular}
\caption{\small RS long-range canonical ranks of the synthetic  flat ``molecular'' cluster constructed 
by random charges distribution and the ranks $R_\Pi$ of the potential restricted to the adjacent box $\Pi$ containing ligand. }
\label{Tab:Times_ranks_3D}
\end{center}
\end{table}
 
   Table \ref{Tab:Times_ranks_3D} demonstrates that when using the rotation procedure for the computation 
   of the ``ligand-protein'' 
   interaction energy for different rotation angles of the ``ligand''  one can use the  
   canonical tensor of small rank supported by the box $\Pi$ of small size
   located in the external to bio-molecule domain (solvent). 
   The rank parameter $R_\Pi$ remains practically independent of the size of the ``protein''.
  
  In what follows, we first consider the basic approach (A) that includes the RS tensor 
  approximation of the long-range part 
  in the molecular electrostatic potential ${P}_{\cal M}(x)$, discretized and precomputed in the large 
  bounding box $\Omega$. Afterward, the options (B) and (C) will be addressed.

 \subsection{Energy calculation by the RS tensor decomposition of  the interaction potential}   
 \label{ssec:Energy_RStensor}

We assume that the long-range part, ${P}_{\cal M}^{long}(x)$, of the 
potential ${P}_{\cal M}(x)$ is discretized on $n\times n \times n$ Cartesian grid in the hypercube 
$\Omega  \supset \Pi$, see Figure \ref{fig:Bound_box}, that includes 
the whole molecular-ligand system of interest, ${\cal M} \cup {\cal L}$. The respective rank-$R$ CP approximation of 
${P}_{\cal M}^{long}(x)$, further denoted by ${\bf P}_{{\cal M},R}^{long}$, 
is supposed to be already precomputed \cite{BKK_RS:18}. 

In what follows, 
we discuss the mathematical details of the energy and force calculation by using RS tensor approximation
for the case of two separated interacting molecular clusters.
We outline the particular details for numerical implementation of the
scheme described by the energy representation  (\ref{eqn:EnergySum_partial})
and applied in section \ref{ssec:Docking_Energy}.

First, we sketch the general theory of energy computation by using only the long-range part
of electrostatic potential as presented in \cite{BKK_RS:18}. Recall that the reference canonical tensor
${\bf P}_R$ approximating the single Newton kernel on an $n\times n\times n $ tensor grid $\Omega_h$
in the computational box $\Omega=[-b,b]^3$ is represented by (\ref{eqn:sinc_general}), where $h>0$ 
is the fine grid size.
For ease of exposition, we assume that the particle centers ${x}_{k}$
are located at some grid points in $\Omega_h$
(otherwise, an additional approximation error may be introduced) such that
each point ${x}_{k}\in \Omega_h$ inherits some multi-index ${\bf i}_{k}\in {\cal I}:=\{1,...,n\}^{\otimes 3}$, 
and the origin
$x=0$ corresponds to  the central point $x^{(0)}$ on the grid indexed by ${\bf n}_0=(n/2,n/2,n/2)$.

In turn, given the short- and long-range parts in the reference tensor ${\bf P}_R={\bf P}_{R_s} + {\bf P}_{R_l}$,
then the canonical tensor ${\bf P}_0$ approximating the total interaction potential 
$P_0(x)  \equiv {P}_{\cal M}(x)$ ($x\in \Omega_h \subset \Omega$) for the $N$-particle system
is represented as a sum of short- and long-range tensor components \cite{BKK_RS:18}, see also \S\ref{sec:ten_formats},
\[
 P_0(x)=\sum\limits_{{k}=1}^N \frac{z_{k} }{\|{x} - {x}_{k}\|}\, \leadsto \,
 {\bf P}_0 = {\bf P}_s + {\bf P}_l\in \mathbb{R}^{n\times n\times n}.
\]
 By construction the tensor ${\bf P}_0={\bf P}_0(x^h)$ is defined at the vicinity of every grid-point $x^h \in \Omega_h$, 
 in particular at the grid-points $x_k + h{\bf e}$, where
the directional vector ${\bf e}=(e_1,e_2,e_3)^T$ is specified by the particular choice of 
the shifting parameters in the set $e_\ell \in\{-1,0,1\}$ for $\ell=1,2,3$. 
This allows us to introduce the useful notation 
${\bf P}_0(x_k + h{\bf e})$ which can be applied to all tensors leaving on $\Omega_h$.
Such notation simplifies the definition of such entities like energy, gradients, forces, etc. (in terms of given 
interaction potential)
applied to the RS tensors in the course of dynamical energy minimization.

The following statement describes the tensor scheme for calculation the interaction energy $E_N$ by 
utilizing the only long-range part ${\bf P}_l$ in the tensor representation ${\bf P}_0$ of $P_0(x)$. 
  
\begin{proposition}\label{lem:InterEnergy} (\cite{BKK_RS:18})
 Let the effective support of the short-range components in the reference
 potential ${\bf P}_R$ does not exceed $\sigma>0$.
 Then the interaction energy $E_N$ of the $N$-particle $\sigma$-separable system given by (\ref{eqn:EnergyLatSum}) can be
 calculated by using only the long range part, ${\bf P}_l$, in the total potential sum as follows,
 \begin{equation}\label{eqn:EnergyFree_Tensor}
 E_N =E_N(x_1,\dots,x_N)= \frac{1}{2} 
 \sum\limits_{{j}=1}^N z_{j}({\bf P}_l({x}_{j}) - z_j {\bf P}_{R_l}(x^{(0)})),
\end{equation}
in $O(d R N)$ operations, where $R$ is the canonical rank of the long-range component ${\bf P}_l$.
\end{proposition}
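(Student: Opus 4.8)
The plan is to reduce the claim to the potential-based energy expression (\ref{eqn:EnergybyPotential})--(\ref{eqn:Potentialminj}) and then use $\sigma$-separability to discard every short-range cross contribution. Since the particle centers $x_k$ are assumed to lie on grid points of $\Omega_h$, the discretized collective potential is the tensor sum ${\bf P}_0 = \sum_{k=1}^N z_k\,{\cal W}_k(\widetilde{\bf P}_R)$ of shifted reference kernels as in (\ref{eqn:Total_Sum}), so that ${\bf P}_0(x_j) = \sum_{k=1}^N z_k\,\widetilde{\bf P}_R(x_j-x_k)$, and the tensor counterpart of the induced non-singular potential $P_{-j}$ in (\ref{eqn:Potentialminj}), evaluated at $x_j$, is ${\bf P}_0(x_j) - z_j\,{\bf P}_R(x^{(0)})$: the subtracted scalar removes exactly the finite, Galerkin-projected self-interaction of particle $j$ with the reference kernel centered at its own site, leaving $\sum_{k\neq j} z_k\,\widetilde{\bf P}_R(x_j-x_k)$. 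Hence the tensor-discretized energy reads
\[
E_N = \frac{1}{2}\sum_{j=1}^N z_j\left({\bf P}_0(x_j) - z_j\,{\bf P}_R(x^{(0)})\right),
\]
which is the exact analogue of (\ref{eqn:EnergybyPotential}).

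Next I would split both the reference kernel and the collective potential into short- and long-range parts, ${\bf P}_R = {\bf P}_{R_s} + {\bf P}_{R_l}$, $\widetilde{\bf P}_R = \widetilde{\bf P}_{R_s} + \widetilde{\bf P}_{R_l}$ and ${\bf P}_0 = {\bf P}_s + {\bf P}_l$, as in (\ref{eqn:Split_Tens})--(\ref{eqn:Total_Sum}), and evaluate ${\bf P}_s$ at a particle site. This is the crux of the argument: ${\bf P}_s(x_j) = \sum_{\nu=1}^N z_\nu\,\widetilde{\bf P}_{R_s}(x_j-x_\nu)$, and since the effective support of the short-range reference tensor $\widetilde{\bf P}_{R_s}$ is contained in the ball of radius $\sigma$ about its center, while $\|x_j - x_\nu\| \ge \sigma$ for all $\nu \neq j$ by $\sigma$-separability, every term with $\nu \neq j$ vanishes at $x_j$. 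Only the self term remains, so ${\bf P}_s(x_j) = z_j\,\widetilde{\bf P}_{R_s}(x^{(0)}) = z_j\,{\bf P}_{R_s}(x^{(0)})$.

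Substituting ${\bf P}_0(x_j) = {\bf P}_l(x_j) + z_j\,{\bf P}_{R_s}(x^{(0)})$ and ${\bf P}_R(x^{(0)}) = {\bf P}_{R_s}(x^{(0)}) + {\bf P}_{R_l}(x^{(0)})$ into the displayed energy, the two copies of $z_j^2\,{\bf P}_{R_s}(x^{(0)})$ cancel, yielding exactly (\ref{eqn:EnergyFree_Tensor}). For the complexity count, ${\bf P}_l$ is a canonical tensor of rank $R$, so one point evaluation ${\bf P}_l(x_j)$ costs $O(dR)$ via the Hadamard product of the $d$ fibers; doing this for all $N$ particles costs $O(dRN)$, the scalar ${\bf P}_{R_l}(x^{(0)})$ being precomputed once, and the outer weighted sum adds only $O(N)$, giving the asserted $O(dRN)$ bound.

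I expect the only real obstacle to be making the short-range cancellation rigorous in the borderline case $\|x_j - x_\nu\| = \sigma$: one has to be precise about the meaning of ``effective support does not exceed $\sigma$'' — namely that on and beyond the sphere of radius $\sigma$ the Gaussian short-range tail falls below the prescribed truncation threshold and is treated as identically zero — so that $\sigma$-separability genuinely forces $\widetilde{\bf P}_{R_s}(x_j-x_\nu) = 0$ for $\nu \neq j$ rather than merely ``negligible''. A minor point worth stating clearly is that (\ref{eqn:EnergyFree_Tensor}) is an exact identity for the tensor-discretized energy; its deviation from the continuous $E_N$ is inherited solely from the sinc/Galerkin approximation of the Newton kernel in (\ref{eqn:sinc})--(\ref{eqn:sinc_general}) and lies outside the scope of this statement.
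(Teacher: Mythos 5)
Your proposal is correct and follows essentially the same route as the paper's own proof: reduce to the potential-based energy formula with the self-interaction term subtracted, split ${\bf P}_0$ and ${\bf P}_R$ into short- and long-range parts, observe that $\sigma$-separability kills every non-local short-range contribution at a particle center so that ${\bf P}_s(x_j)=z_j{\bf P}_{R_s}(x^{(0)})$, and cancel. Your version is in fact slightly more careful than the paper's sketch (which simply asserts the short-range localization "by assumption"), notably in flagging the borderline case $\|x_j-x_\nu\|=\sigma$ and in distinguishing the exact tensor identity from the underlying sinc/Galerkin approximation error.
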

\begin{proof} We sketch the proof for the sake of completeness. 
Similar to \cite{Khor_bookQC_2018}, where the case of lattice structured systems was analyzed,
we show that the interior sum in (\ref{eqn:EnergyLatSum})
can be obtained by using  the tensor ${\bf P}_0$
traced onto the centers of particles ${x}_{k}$, such that
the ''self-interaction`` term corresponding to ${x}_{j}={x}_{k}$ is removed, 
\[
 P_{N \backslash  j}(x_j) = \sum\limits_{{k}=1, {k}\neq {j}}^N \frac{z_{k} }{\|{x}_{j} - {x}_{k}\|}
  \, \leadsto \, {\bf P}_0({x}_{j}) - z_{j}{\bf P}_R(x^{(0)}).
\]
Here the reference canonical tensor ${\bf P}_R$ is evaluated at the origin $x^{(0)}=0$
and the potential $P_{N \backslash  j}(x)$ is defined by (\ref{eqn:Potentialminj}). 
Hence, we arrive at the tensor approximation of the interaction energy
\begin{equation}\label{eqn:EnergyFree_TensF}
 E_N = 
 \frac{1}{2} \sum\limits_{{j}=1}^N z_j P_{N \backslash  j}(x_j) \, \leadsto \,
 \frac{1}{2} \sum\limits_{{j}=1}^N z_{j}({\bf P}_0({x}_{j}) - z_j {\bf P}_R(x^{(0)}).
\end{equation}
Now we split ${\bf P}_0$ into the long-range part (\ref{eqn:Long-Range_Sum})
and the remaining short-range potential, to obtain
${\bf P}_0({x}_{j})= {\bf P}_s({x}_{j}) + {\bf P}_l({x}_{j})$, and the same for the reference
tensor ${\bf P}_R$.
By assumption, the short-range part ${\bf P}_s({x}_{j})$ at point ${x}_{j}$ 
in (\ref{eqn:EnergyFree_TensF}) includes only  the contribution from the 
local term corresponding to the atomic location at $x_j$, i.e.  ${\bf P}_{s}(x_j)=z_j {\bf P}_{R_s}(x^{(0)})$.
Due to the corresponding cancellations on the right-hand side of
(\ref{eqn:EnergyFree_TensF}), we find that $E_N$ depends only on ${\bf P}_l$,
which proves the final tensor representation in (\ref{eqn:EnergyFree_Tensor}).

The linear complexity scaling of the order of $O(d R N)$ is proven by
taking into account the $O(d R )$-cost of the point evaluation for the long-range rank-$R$ canonical 
tensor ${\bf P}_l$.
\end{proof}

It was already noticed in \S\ref{ssec:Basics_Electrostatics}, 
see (\ref{eqn:EnergyLatSum_mixed}), (\ref{eqn:EnergySum_partial}),
that the complexity of the above calculation can be reduced 
by splitting of the total energy into three terms such that the only interaction energy (binding energy) between 
two clusters ${\cal M}$ and ${\cal L}$, $E_N({\cal M},{\cal L})$, 
should be evaluated in the course of energy functional minimization, taking into account that $M \gg L$.
\begin{lemma}\label{lem:energy_ligand}
(Fast binding energy calculation).
 Given the rank-$R$ long-range component in ${\bf P}_{\cal M}$,  
 ${\bf P}_{{\cal M},R}^{long}$, then the binding energy of each molecular-ligand configuration given by (\ref{eqn:EnergySum_partial})
 can be calculated in $(d-1) R\, L_{\cal L}$ operations as follows,
  \begin{equation}\label{eqn:EnergySum_ligand}
E_N({\cal M},{\cal L})= 
 \sum\limits_{{\ell}=1}^{L_{\cal L}} z_\ell {\bf P}_{{\cal M},R}^{long}(x_\ell),\quad x_\ell \in {\cal L},
\end{equation}
 where $L_{\cal L}$ is the number of atoms in ligand.
\end{lemma}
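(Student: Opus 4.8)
The plan is to reduce the binding-energy expression (\ref{eqn:EnergySum_partial}) to the long-range tensor evaluation (\ref{eqn:EnergySum_ligand}) by exploiting the $\sigma$-separation between the clusters ${\cal M}$ and ${\cal L}$, and then to count the arithmetic cost of a point evaluation of a rank-$R$ canonical tensor. First I would start from the exact identity $E_N({\cal M},{\cal L})=\sum_{\ell=1}^{L_{\cal L}} z_\ell P_{\cal M}(x_\ell)$ and insert the range-separated splitting of the protein potential, $P_{\cal M}=P_{\cal M}^{short}+P_{\cal M}^{long}$, whose tensorized form on $\Omega_h$ reads ${\bf P}_{\cal M}={\bf P}_s+{\bf P}_l$ with ${\bf P}_l\approx{\bf P}_{{\cal M},R}^{long}$, as in (\ref{eqn:Total_Sum}) and \S\ref{sec:ten_formats}. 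An immediate observation is that, in contrast to Proposition \ref{lem:InterEnergy}, no self-interaction correction is needed here: since ${\cal M}\cap{\cal L}=\emptyset$, every index pair $(\ell,m)$ in (\ref{eqn:EnergyLatSum_mixed}) already satisfies $x_\ell\neq x_m$, so $P_{\cal M}$ is sampled away from its own singularities and no diagonal term $-z_\ell{\bf P}_{R_l}(x^{(0)})$ appears.

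The key step is to show that the short-range tensor ${\bf P}_s$ vanishes at every ligand grid point $x_\ell\in{\cal L}$. By construction ${\bf P}_s=\sum_{m=1}^{M} z_m\,{\cal W}_m(\widetilde{\bf P}_{R_s})$ is a superposition of single-particle short-range bumps, each centered at a protein atom $x_m$ with effective support of diameter at most $\sigma$ (the van der Waals radius), cf. (\ref{eqn:Split_Tens})--(\ref{eqn:Total_Sum}) and Definition \ref{Def:RS-Can_format}. Since the posing/docking process keeps every admissible configuration $\sigma$-separable (Remark \ref{rem:sigma-separable}) and the two clusters are disjoint, one has $\|x_\ell-x_m\|\ge\sigma$ for all $m=1,\dots,M$ and all $\ell=1,\dots,L_{\cal L}$; hence no short-range bump of ${\bf P}_{\cal M}$ reaches a ligand grid point, so ${\bf P}_s(x_\ell)=0$. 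Consequently $P_{\cal M}(x_\ell)={\bf P}_l(x_\ell)={\bf P}_{{\cal M},R}^{long}(x_\ell)$ up to the sinc-quadrature accuracy $\varepsilon$, and substituting into (\ref{eqn:EnergySum_partial}) gives (\ref{eqn:EnergySum_ligand}).

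Finally I would count the operations. Evaluating the rank-$R$ canonical tensor ${\bf P}_{{\cal M},R}^{long}=\sum_{k=1}^{R}{\bf p}_k^{(1)}\otimes\cdots\otimes{\bf p}_k^{(d)}$ at one grid point amounts to forming, for each of the $R$ rank-one terms, the product of its $d$ univariate factor values ($d-1$ multiplications per term, with the weight folded into one factor vector) and accumulating the $R$ results; the dominant cost is $(d-1)R$ multiplications per point. Doing this for the $L_{\cal L}$ ligand atoms, while the outer weighted sum over $z_\ell$ contributes only $O(L_{\cal L})$ further operations, yields the claimed $(d-1)R\,L_{\cal L}$ cost, i.e.\ $2R\,L_{\cal L}$ for $d=3$. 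By Theorem \ref{thm:Rank_LongRange} the rank $R$ is essentially independent of the protein size $M$, and ${\bf P}_{{\cal M},R}^{long}$ is precomputed only once.

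I expect the short-range cancellation to be the only genuinely delicate point: one must be sure that ``effective support $\le\sigma$'' together with the $\sigma$-separability of admissible poses makes ${\bf P}_s$ vanish \emph{exactly} at all ligand grid points rather than merely be small --- which is precisely where the van der Waals distance control built into the docking algorithm enters, and it is the hypothesis that must be propagated carefully through the estimate (together with the already-controlled $O(\varepsilon)$ error of the sinc approximation to ${\bf P}_l$).
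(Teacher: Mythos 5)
Your proposal is correct and follows essentially the same route as the paper: start from the exact cross-energy identity (\ref{eqn:EnergySum_partial}), discard the short-range part of ${\bf P}_{\cal M}$ at the ligand points, and count $(d-1)R$ operations per point evaluation of the rank-$R$ canonical tensor. The only difference is that you spell out explicitly why ${\bf P}_s$ vanishes at the ligand atoms (disjointness plus $\sigma$-separability, with no self-interaction correction needed), whereas the paper simply cites the argument of Proposition \ref{lem:InterEnergy}; your version is slightly more careful on that point but not a different proof.
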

\begin{proof}
 We chose the binding energy representation in the form (\ref{eqn:EnergySum_partial}), 
 \begin{equation}\label{eqn:EnergySum_partial_lig}
E_N({\cal M},{\cal L})= \sum\limits_{{\ell}=1}^{L_{\cal L}} z_\ell P_{\cal M}(x_\ell),\quad x_\ell \in {\cal L},
\end{equation} 
 and substitute the exact 
 potential $P_{\cal M}(x)$ by its discretized long-range part approximated by rank-$R$ canonical 
 tensor ${\bf P}_{{\cal M},R}^{long}$ to obtain (\ref{eqn:EnergySum_ligand}).
 Indeed, the latter is just the discretized version of (\ref{eqn:EnergySum_partial}), written in the form
 (\ref{eqn:EnergySum_partial_lig}),
taking into account the argument of Proposition \ref{lem:InterEnergy} stating that the short-range contribution to 
$P_{\cal M}(x)$, $x \in {\cal L}$ does not effect the energy expression in (\ref{eqn:EnergySum_partial_lig}).

Finally, we notice that the pointwise calculation of the rank-$R$ canonical tensor in $d$ dimensions 
amounts to $(d-1)R$ arithmetic operations, then the complexity result $(d-1) R\, L_{\cal L}$ follows from the 
representation (\ref{eqn:EnergySum_ligand}).
\end{proof}

Lemma \ref{lem:energy_ligand} leads to the important conclusion that application of the RS tensor decompositions 
of the electrostatic potential of bio-molecule
allows an implementation of one step in the posing minimization process (i.e., the energy calculation for current 
configuration) at the numerical cost that
does not depend on the size of the target bio-molecule. 
It depends on the quality of the precomputed data, i.e. optimality of the rank parameter $R$. 

Table \ref{Tab:Times_check_3D} demonstrates the accuracy of energy calculation via (\ref{eqn:EnergySum_ligand})
depending on the initial rank parameter $R$ of the complete tensor and on the grid size $n$.
The results are valid for the full rank-$R$ tensor and the long-range tensor of rank equal to $R_L=16$.
\begin{table}[htb]
\begin{center}\footnotesize
\begin{tabular}
[c]{|r|r|r|r|}%
\hline
grid size $n^3$  & initial CP rank  & ${E_N}_{Tensor}$  & $E_N - {E_N}_{Tensor}$   \\
\hline  
$128^3$ &  20  &  -0.2511    & 0.0154       \\ 
\hline
$256^3$ &  21  &  -0.2432    & 0.0075         \\ 
\hline
$512^3$ & 24   &  -0.2394    & 0.0037        \\ 
\hline 
$1024^3$ &  26 &  -0.2376   & 0.0019       \\ 
\hline
$2048^3$ & 27  &  -0.2366  & $9.3\cdot 10^{-4}$ \\ 
\hline
$4096^3$ & 29  &  -0.2362  & $4.6\cdot 10^{-4}$     \\ 
\hline
$8192^3$ & 31  &  -0.2358  & $2.3\cdot 10^{-4}$  \\ 
\hline
$16384^3$ & 32  &  -0.2358  & $1.15\cdot 10^{-4}$  \\ 
\hline
\end{tabular}
\caption{\small Comparison of direct and tensor computations of the electrostatic interaction  
energy of two point charges at the distance
of 3 atomic units (approximately 1.5 \AA{}). Direct computation results in $E_N=-0.2357$ hartree. }
\label{Tab:Times_check_3D}
\end{center}
\end{table}

Recall that the complexity of the direct energy calculation by (\ref{eqn:EnergyLatSum_mixed}) 
amounts to $c_0 M L$ ($c_0$ is the cost for calculation of the single Newton kernel) 
for each position of ligand,
and it likely becomes highly expensive in the course of the energy minimization 
for large size $M$ of the bio-molecule.
Rigorous description, mathematical analysis and numerical verification of the beneficial tensor-based computational
techniques is one of the main results of this paper.

\subsection{Gradient and force field calculations}\label{ssec:Forces_Applic}

 Computation of electrostatic forces in and gradients of interaction potential 
 in multi-particle systems is a computationally extensive problem.
The algorithms based on Ewald summation technique \cite{Ewald:27} are considered as the basic 
approaches in the literature, see for example the discussion  in \cite{HoEa:88}.
Here we recall the recently introduced alternative approach to force calculation on a particle 
by using RS tensor format for representation of the electrostatic potential involved 
in the energy representation (\ref{eqn:EnergySum_ligand}). 

In view of representation (\ref{eqn:EnergySum_ligand}) for the energy, first, we consider the computation 
of gradient with respect to the variation in the position of ligand.
Given an RS-canonical tensor ${\bf A}={\bf P}_{\cal M}$ as in (\ref{eqn:RS_Can}) with 
the width parameter $\sigma >0$, 
the discrete gradient $\nabla_h=(\nabla_1,\ldots,\nabla_d)^T$ applied to the long-range
part in ${\bf P}_{\cal M}$, ${\bf A}_l = {\bf P}_{{\cal M},R}^{long}$, at the grid points of $\Omega_h$  
can be calculated in the form of the $R$-term canonical tensor by applying the simple
one-dimensional finite-difference (FD) operations to the long-range part 
in ${\bf P}_{\cal M}={\bf A}_s + {\bf A}_l$,
\begin{equation}\label{eqn:Gradients_Tens}
{\bf A}_l={\sum}_{k =1}^{R} \xi_k \, {\bf a}_k^{(1)}  \otimes \cdots \otimes {\bf a}_k^{(d)},
\quad \nabla_h {\bf A}_l= {\sum}_{k =1}^{R} \xi_k ({\bf G}_k^{(1)},\ldots,{\bf G}_k^{(d)})^T,
\end{equation}
with tensor entries at any fixed grid point in the accompanying box $\Pi_h \supset {\cal L}$, 
\[
 {\bf G}_k^{(p)} ={\bf a}_k^{(1)}  \otimes \cdots\otimes 
 \nabla_{p,h} {\bf a}_k^{(p)} \otimes \cdots\otimes{\bf a}_k^{(d)},
\]
where $\nabla_{p,h}$ ($p=1,\dots,d$) is the univariate FD differentiation scheme
(by using backward or central differences). Numerical complexity of
the representation (\ref{eqn:Gradients_Tens}) at each grid point can be estimated by $O(d R )$, where 
the canonical rank $R$ is almost uniformly bounded in the number of particles $M$ in the bio-molecule ${\cal M}$.

Furthermore, 
the force vector $ {\bf F}_\ell \in \mathbb{R}^d$ (as well as the gradient) on the particle located 
at $x_\ell \in {\cal L}$ is obtained by differentiating the 
electrostatic potential binding energy $E_N({\cal M},{\cal L}) = E_N(x_1,\dots,x_L)$, 
given by (\ref{eqn:EnergySum_ligand}), with respect to $x_1,\dots,x_L$ at points $x_\ell$, 
\[
 {\bf F}_\ell=-\frac{\partial}{\partial x_\ell} E_N = - \nabla_{| x_\ell} E_N, \quad \ell=1,\ldots,L.
\]
In the general case of full energy expression, 
$
  E_N({\cal X})= E_M({\cal M}) + E_L({\cal L}) + E_N({\cal M},{\cal L}),
$
it can be calculated explicitly (see \cite{HoEa:88}) in the form,
\[
 {\bf F}_\ell = \frac{1}{2} z_{\ell} \sum\limits_{{k}=1, {k}\neq {\ell}}^N  z_{k} 
 \frac{{x}_{\ell} - {x}_{k} }{\|{x}_{\ell} - {x}_{k}\|^3}\in \mathbb{R}^d,\quad \ell=1,\ldots,L.
\]

In our special case of protein-ligand rigid docking we have to apply the gradient operation 
only to the non-stationary energy term, 
$E_N({\cal M},{\cal L})$, where the latter is given by (\ref{eqn:EnergySum_ligand}). This leads to
the expression
\[
 {\bf F}_\ell = - \nabla_{| x_\ell} E_N({\cal M},{\cal L}) =
 z_{\ell} \sum\limits_{{m}=1}^M  z_{m} 
 \frac{{x}_{\ell} - {x}_{m} }{\|{x}_{\ell} - {x}_{m}\|^3} \equiv z_{\ell} {\bf E}_M(x_\ell),\quad
 {x}_{\ell}\in {\cal L},\quad \ell=1,\ldots,L,
\]
where the corresponding vector potential (electric vector field) is defined by
\[
{\bf E}_M(x) :=  \nabla_{| x} {\cal P}_{\cal M}(x)  =  
\sum\limits_{{m}=1}^M  z_{m} \nabla_{| x} \frac{1}{\|{x} - {x}_{m}\|}=
\sum\limits_{{m}=1}^M  z_{m} 
 \frac{{x} - {x}_{m} }{\|{x} - {x}_{m}\|^3}, \quad {x}_{m} \in {\cal M}, \quad x \in {\cal L}.
\]
It is possible to construct the RS tensor representation for the vector 
field ${\bf E}_M(x)$ directly by combining the RS representation of radial basis functions 
$p(r)=1/r$ and $p(r)=1/r^2$. Another option is the application of gradient $\nabla_{| x}$ 
in analytic form to the initial representation of the long-range part in the Newton kernel
$\frac{1}{\|{x}\|}$ by a sum of Gaussians. 
This issue will be addressed elsewhere.

In the rest of this section, we describe the finite difference approach based on 
numerical differentiation of the energy functional based on RS tensor representation 
of the $M$-particle interaction potential ${\bf P}_{{\cal M},R}$ on fine spacial grid.
The differentiation of the long-range potential ${\bf P}_{{\cal M},R}^{long}$ with respect to arbitrary position, 
say for $x_\ell=x_L$, 
is based on the explicit representation (\ref{eqn:EnergySum_ligand}) in Lemma \ref{lem:energy_ligand},
which can be used for the discrete differentiation in $\mathbb{R}^d$, $d=3$. 
 Specifically, when approximating the force field vector at point $x=x_L$, 
${\bf F}_L=\{F_{L,1},F_{L,2},F_{L,3}\}\in \mathbb{R}^3$, 
we arrive at the powerful representation for 
the first differences in direction ${\bf e}_i$ for $i=1,2,3$,
\begin{equation}\label{eqn:differ_ligand}
  F_{L,i} \approx  \frac{1}{h}\left( E_N(\cdot,x_L) - 
  E_N(\cdot,x_L-h{\bf e}_i)\right) = 
 \frac{z_L}{h}\left( {\bf P}_{{\cal M},R}^{long}(x_L) -
 {\bf P}_{{\cal M},R}^{long}(x_L -h{\bf e}_i)\right),
\end{equation}
with the abbreviation $E_N(\cdot,x_L) := E_N(x_1,\ldots,x_L)$.

The straightforward implementation of the above relation for three different values of
${\bf e}_1=(1,0,0)^T$, ${\bf e}_2=(0,1,0)^T$ and ${\bf e}_3=(0,0,1)^T$
is reduced to the $(d+1)$ calls of the basic
procedure for computation the tensor ${\bf P}_{{\cal M},R}^{long}$ corresponding to $d=3$ spatial directions thus 
leading to the total cost $O((d-1)(d+1) R)$ almost independently on the size $M$ of bio-molecule ${\cal M}$.

The above techniques apply to force calculation on a single charged particle, \cite{BKK_RS:18}.
In Section 4, we describe the new approach for force calculation 
applicable to the case of rigid cluster of charged particles, for instance in case of ligand molecule.

\subsection{Posing as constrained energy minimization}
\label{ssec:Docking_Energy}


 Calculation of the appropriate protein-ligand posing positions plays the central role
in predicting of binding conformations in the rigid docking process. 
In our numerical scheme, we treat the posing process as constrained minimization of the effective
electrostatic potential energy functional of the protein-ligand ensemble, 
that is finding the local minimum points on the potential energy surface (binding energy).

We numerically study the energy landscape  of the complex particle
system ${\cal X}$ of interest, composed of two
non-overlapping subsets, ${\cal X}= {\cal M} \cup {\cal L} $ 
(large bio-molecule $ {\cal M}$ and small ligand ${\cal L}$),
by means of minimization the energy functional, see (\ref{eqn:EnergySum_partial}),
$$ E_N({\cal X}) =E_N({\cal M},{\cal L})=  E_N({\cal M},x_1,\dots,x_L),
$$ 
under certain
constraints on the mutual location of rigid atomic configurations ${\cal M}, {\cal L} \subset {\cal X}$, that is
they need to be separated by the so-called van-der-Waals distance $\sigma>0$. 

We suppose that position of the bio-molecule ${\cal M}$ is fixed, while ligand ${\cal L}$ 
is allowed for 3D geometric movements ${\cal F}={\cal R}\circ{\cal T}$ composed of
a superposition of translation ${\cal T}$ applied to the geometric center $x_0$ of the ligand,
and rotation ${\cal R}$ in $\mathbb{R}^3$ around this center. 
Then the problem is reduced to finding a posing positions
of ligand that realizes the nearly minimal value of the interaction energy $E_N({\cal X})$ 
under the assumptions in Remark \ref{rem:sigma-separable}.

Now each geometric transformation ${\cal F}$ is uniquely defined by parameter vector ${\bf q}=(x_0,\phi)$,
where $x_0=(x,y,z)\in \mathbb{R}^3$ can be associated with the coordinates of geometric 
''center of mass`` for ${\cal L}$ and $\phi=(\alpha, \beta)$
denotes the rotational angles in $\mathbb{R}^3$ around $x_0=(x,y,z)$. We may assume that the positions 
of atoms in both ${\cal M}$, and ${\cal L}$
attain the representation on the Cartesian $n \times n\times n$ grid $\Omega_h$ such that $x_0\in\Omega_h $
lives on that grid.
Then the minimization problem can be formulated as finding the best fit in terms
of the interaction energy,
\begin{equation}\label{eqn:Energy_Min}
 [ x_0^\ast,\phi^\ast ] = \mbox{arg min}_{x_0,\phi} E_N[{\cal M},{\cal F}_{x_0,\phi}({\cal L})]
 \quad s.t. \quad  \mbox{dist} ({\cal M},{\cal F}({\cal L}))\geq \sigma>0,
\end{equation}
where the latter constraint means the non-overlapping requirements for any of predicted poses. 
This leads to the simplified version of the so-called scoring function in our case, see e.g. \cite{Nature_Rev:2004}.
In this concept, we suppose that the geometry of both configurations ${\cal M}$ and ${\cal L} $
remains fixed and only rotations and translations of ${\cal L} $  in $\mathbb{R}^3$ 
are allowed (rigid docking).

In the particular case of minimization problem (\ref{eqn:Energy_Min}) the energy functional 
can be discretized on the grid $\Omega_P$ in five 
dimensional parametric space for vectors ${\bf q}=(x_0,\phi)$, that includes the three spacial 
parameters specifying the translation variables, i.e. coordinates of $x_0=(x,y,z)$, 
and two parameters ${\phi}$ defining the rotation angles of ligand
in $\mathbb{R}^3$ around fixed point $x_0$. This leads to the challenging 
minimization problem in five-dimensional parametric space, $\mathbb{R}^5$, where each functional 
evaluation might be expensive. 
For example, the fine discretization with $m=50$ sampling points in each of five parameters leads to the huge 
size of about $m^5=50^{5}$ samples for the discretized parametric space of size $O(m^{{\otimes 5}})$, where 
PES has to be investigated. Hence, the full search in $\mathbb{R}^{5}$ quickly becomes 
non-tractable.

To reduce the numerical cost, we propose to apply (a) the low rank tensor representation of the effective 
interaction potential $P_{\cal M}$
and subsequently the super-fast calculation of the interaction energy, (b)
the use, if necessary, a sequence of dyadic refined representation grids $\Omega_h$ to lift-up 
the chipper information from the coarse to fine levels, (c) introducing a sequence of reference grids $\Omega_P$
in parametric space, 
and (d) certain selection of the most appropriate rotation positions (kind of supervised learning) as well as 
(e) reduce the geometric area for location of ligand ${\cal L}$ in the course of energy minimization.

Figure \ref{fig:layer_ind} illustrates the example of layer-like enveloping ring where the energy optimization 
has to be implemented, see item (d) above.
\begin{figure}[htb]
\centering
\includegraphics[width=6.0cm]{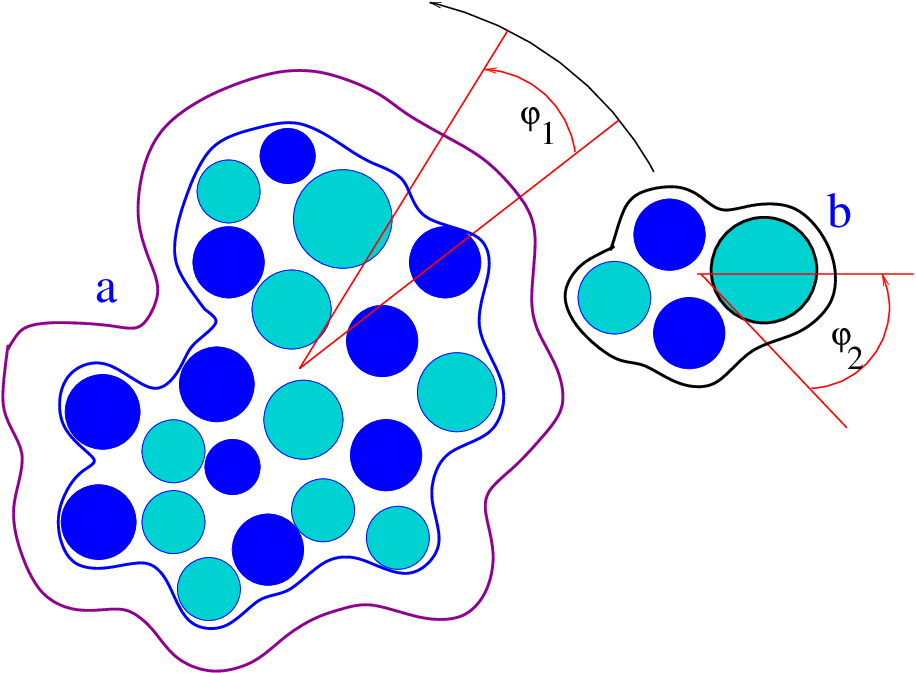}
\caption{\small Schematic illustration of the layer-like enveloping ring where the energy optimization 
has to be implemented.}
\label{fig:layer_ind}
\end{figure} 

Notice that in the case of protein-ligand rigid docking problem the fixed object (protein) ${\cal M}$
 may have extremely complicated profile, i.e. the landscape of the admissible part 
 of the potential energy surface (PES), while the moving part (ligand) ${\cal L}$ has relatively simple 
 geometry and may include the moderate 
 number of particles. Hence, in the following complexity discussions we assume that 
 \[
  N_{\cal M}:=\# {\cal M} \gg N_{\cal L} := \# {\cal L}.
 \] 
 The complex shape of the big target molecule may include several small pieces
which could be considered as good candidates for the search of local 
minimum of the scoring functional in (\ref{eqn:Energy_Min}).  
This means that the optimized and 
robust algorithms to approximate solution(s) of this global minimization problem 
should be only applied to several smaller sub-problems, which solve the problem
(\ref{eqn:Energy_Min}) patch-wise by investigating the local minimum 
around  the selected small pieces of the molecular surface. In this approach the 
initial binding poses could be generated manually by optimizing the scoring function within some cylinder (cone) 
with the axis directed to the ``geometric center'' of the bio-molecule
and  intersecting the ``host'' piece of the molecule (host-guest binding).

Recall that in the course of 
transformation process of ligand for optimization of the collective configuration in the rigid docking 
the only interaction part of the full energy, $E_N({\cal M},{\cal L})$ (binding energy), 
is changing, see (\ref{eqn:EnergyLatSum_mixed}),  
hence, the only $E_N({\cal M},{\cal L})$ should be updated after each 
geometric transformation of ligand, see Lemma \ref{lem:energy_ligand}.

 Our numerical scheme for modeling of the protein-ligand docking includes three main blocks: \\
(A1) Algorithm for calculation of the low-rank tensor-based representation electrostatic potential (TEP) of protein discretized on 3D Cartesian grid in the exterior of ${\cal M}$. 
Application of Algorithm TEP (see below) is considered as the pre-computing step
to be performed only once before the beginning of energy minimization process.\\
(A2) Tensor-based Algorithm for fast calculation of 
the protein-ligand (electrostatic) interaction energy (PLIE) 
by using the once pre-computed rank-structured electrostatic potential of the bio-molecule ${\cal M}$, represented on 
$n\times n\times n$ Cartesian grid by using Algorithm TEP below.\\
(A3) Algorithm for finding the appropriate posing positions via
interaction energy minimization (PPIEM), see (\ref{eqn:Energy_Min}), 
which includes multiple calls to the Algorithm PLIE in the course of minimization
trajectory of the ligand.

First, we describe the Algorithm TEP to be used for calculation  of the  electrostatic energy in the CP (canonical) tensor parametrization. 

{\bf Algorithm TEP.} The low-rank representation for electrostatic potential of ${\cal M}$ in $\Omega_h$.
  \begin{enumerate}
  \item \emph{Initialization: Given positions and charges of all atoms in ${\cal M}$ and in ${\cal L}$, grid size $n$
  and the van der Waals constant $\sigma >0$. 
  Construct the $n\times n\times n$ Cartesian representation grid that envelops the domain ${\cal M}\bigcup {\cal L}$}. 
  
  \item \emph{Generate the rank-$R_L$ canonical reference tensor representation ${\bf N}_L$ 
  for the long-range part in the Newton kernel as a sum of long-range 
  Gaussians living on  $2n\times 2n\times 2n$ 3D Cartesian grid, with complexity $O(n)$,  \cite{BeHaKh:08}.
 Use the van der Waals distance $\sigma >0$ to control from below the support of long-range Gaussians.} 
 
  \item \emph{Perform summation of shifted copies of the long-range Gaussians in the reference 
  tensor traced onto $n\times n\times n$ 3D Cartesian representation grid, 
  by their replication to the centers of charged particles. 
  The cost of the respective one-dimensional shifts of canonical vectors is $O(R_L)$ for every particle.
  Thus, we obtain the long-range part of collective free space electrostatic potential 
  for a protein with the initial rank $R' = N_{\cal M} R_L$, 
  where $N_{\cal M}$ is the number of particles in the protein.} 
  
  \item  \emph{Reduce the rank of the resulting canonical tensor to $R\ll R'$ 
  by the canonical-Tucker-canonical transform with possible application of add-and-compress techniques.  } 
  
  \item  \emph{Output: the long-range part of the free-space collective
  electrostatic potential of the protein, ${\bf P}_{{\cal M},R}^{long}$, represented on the whole 
  $n\times n\times n$ 3D grid at the cost $O(N_{\cal M} n)$.}
 \end{enumerate}

The CP tensor representation of the free-space electrostatic potential of the protein  obtained by procedure TEP
is the prerequisite for fast computation of  electrostatic interaction energy 
(PLIE) for protein-ligand complex, given a position of the ``protein'' and ``ligand''. 
Sketch of the Algorithm PLIE is presented as follows.

{\bf Algorithm PLIE.} Fast calculation of the protein-ligand interaction energy.
  \begin{enumerate}
   \item \emph{Given the collective long-range electrostatic potential of the bio-molecule,
   ${\bf P}_{{\cal M},R}^{long}$, represented in the rank-$R$ CP format on the whole computational grid $\Omega_h$ 
   (pre-computed by algorithm TEP), as well as charges $q_i$ and  coordinates of the particles 
   in the ligand, $(x_i,y_i,z_i)_{i=1:N_L}$.}
   
   \item \emph{Sample the $q_i$-wighted potential ${\bf P}_{{\cal M},R}^{long}$ at the centers 
   of the ligand 
   $(x_i,y_i,z_i)_{i=1:N_L}$, by using the canonical-to-point algorithm to obtain a vector 
   ${\bf p}_{\cal L}$  of size $N_L$, composed of these point-wise  values of the potential. 
   The cost of the operation is $O(R\, N_L)$. }
   
   \item \emph{Construct the vector of charges of the particles in the ligand, 
   ${\bf z}_{\cal L}$, and compute the target interaction energy 
   by $E_N({\cal M},{\cal L})= \langle{\bf z}_{\cal L},{\bf p}_{\cal L}  \rangle$,
   see (\ref{eqn:EnergySum_ligand}) and Lemma \ref{lem:energy_ligand}.
   }
      \end{enumerate}
    Analysis of {\bf Algorithm PLIE} leads to the following conclusion.
 \begin{remark}\label{rem:TEIE}     
Once the long-range electrostatic energy is computed in rank-$R$ CP format, the calculation of the interaction energy of the protein and ligand at the given location of ${\cal L}$
is estimated by $O(R\, N_L)$, i.e. it is very cheap and practically does not depend on the size of the protein
and on the size of the 3D $n\times n \times n$ Cartesian representation grid.
\end{remark}
The algorithms TEP and PLIE described above  are based on the well established deterministic mathematical methods
such that given input data, the output can be calculated by finite sequence of linear/multilinear algebraic operations.

However, our algorithm for finding the appropriate posing positions via interaction energy minimization (PPIEM)
includes several heuristic procedures and does not guarantee in general the finding of all appropriate 
configurations realizing 
quasi optimal extreme points on PES. Therefore, in what follows, we describe only the main ingredients and 
principal beneficial features of the proposed techniques. Notice that presented tensor-based 
approach has several potential advantages, 
however their efficient numerical realization depends on the size of bio-molecule of interest thus 
requiring slightly modified versions of the computational algorithm. In this concern, we distinguish two versions
of the algorithm oriented either on \emph{moderate size} bio-molecule or onto large \emph{protein-complexes}. 

{\bf Algorithm PPIEM.} (Sketch).
Global minimization of the interaction energy.\\
(A0) \emph{Initialization:} define the angle resolution parameters
 $\delta_L=(2\pi)/m_L$ and $\delta_M=(2\pi)/m_P$, where $m_L$ and $m_P$ are 
the number of rotations for the ligand itself and rotations of the ligand around bio-molecule, respectively.\\
(A1) Given bio-molecule ${\cal M}$, calculate the rank-$R$ tensor-based representation of 
the long-range 
part of the electrostatic potential of ${\cal M}$ in the bounding box $\Omega_h$ by using Algorithm TEP. 
Application of Algorithm TEP is considered as the pre-computing step and performs only once.\\
(A2) Identify the ``geometric'' centers of the bio-molecule and ligand, denoted by $x_P$ and $x_L$, respectively.\\
(A3) Move the ligand from the initial position toward the center $x_P$ along the line from 
$x_L$ to $x_P$ to achieve the minimal admissible distance to the protein, given 
the van der Waals distance $\sigma >0$.\\ 
(A4) Calculate the energy for all rotations of the ligand around $x_L$ and store the corresponding data array.\\
(A5) Perform all rotations of the center of ligand $x_L$ around the center of protein, $x_P$, and for each 
position repeat calculations in item (A4). Store the results of all energy calculations.\\
(A6) Find all minimum points in the resultant data array composed of all local energy minimums
obtained in item (A4).\\
(A7) Return the ligand into the position of detected minimum(s), with the respective orientation, see item (A6),
thus representing the \emph{output for the appropriate posing positions}.\\

The main beneficial feature of the Algorithm PPIEM is the super-fast calculation of the interaction energy 
for every protein-ligand configuration at the cost $O(R\, N_L)$, 
 where $R$ is the optimized tensor rank of the 
long-range part in the electrostatic potential of large bio-molecule ${\cal M}$.
This cost weakly (logarithmically) depends on the size of the protein,
that allows to perform very fast the energy calculation (practically at the cost that is almost independent of
the size of protein) for each mutual protein-ligand location 
arising in the curse of energy optimization. 

\section{Numerical examples for moderate size bio-molecules} \label{sec:numerics}

\subsection{Numerics for synthetic data} \label{ssec:synthetic_example}

The above defined computational schemes  have been implemented in Matlab.
The numerical flowchart includes three main algorithms. 
  \begin{itemize}
   \item 
 The discretized collective electrostatic potential of the large biomolecule 
  is computed in the low-rank RS tensor format in the bounding box $[-B,B]^3$. 
  This is implemented by first generating the reference Newton kernel in a form of 
  a low-rank canonical tensor on a 3D  $n\times n\times n $ Cartesian grid. 
  The long-range part of the collective potential is then calculated by shifting and summing 
  the long-range vectors of the reference Newton kernel in correspondence with coordinates 
  of particles in the protein.
  The last step  is the canonical-to-Tucker rank reduction by RHOSVD. 
  Given protein molecule, the rank-$R$ tensor structured long range electrostatic potential 
  is pre-computed in parametrized form only once requiring $3 R n \ll n^3$ storage size. 
  \item Calculation of the electrostatic protein-ligand interaction energy (electrostatic 
  part of the free binding energy) for the given position of the ligand. This is performed by  weighted  
  sampling (with the weights equal to atomic charges in ligand) of the values of the  collective electrostatic 
  potential for the protein evaluated at the coordinates of particles in the ligand. 
  The numerical cost is estimated by $O(R \, N_L)$.
  \item In our deterministic scheme we place both protein and ligand in 
  the computational box. Then we move the ligand towards  the admissible 
  position(s) under the control of the van der Waals distance, and compute and store the binding energy for each 
  rotation of the ligand around its center. 
  The procedure is repeated for all prescribed  rotations around the protein. 
  Finally, we find the minimum(s) over the computed values of energy on the potential energy surface.  
   \end{itemize}

In the following numerical examples we demonstrate how the presented complex of 
tensor-based algorithms, TEP-PLIE-PPIEM, works on some synthetic data for  the molecular systems of atoms 
placed on the plane. 
In this case all forces act in the same plane and the whole dynamical trajectory of the ligand 
along that plane  is parametrized by three 
parameters, two azimuthal  rotation angles around the geometric centers of each bio-molecule and the 
radial distance between molecules.
\begin{figure}[htb]
\centering
\includegraphics[width=5.0cm]{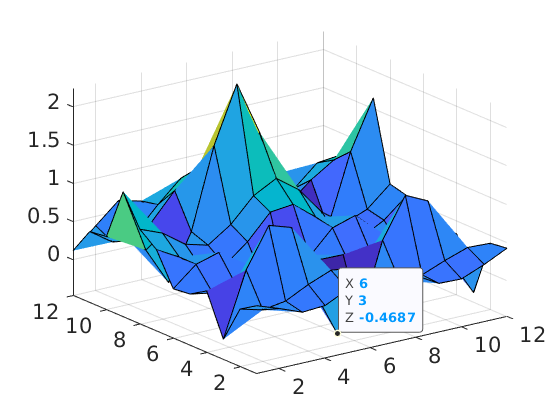}
\includegraphics[width=5.0cm]{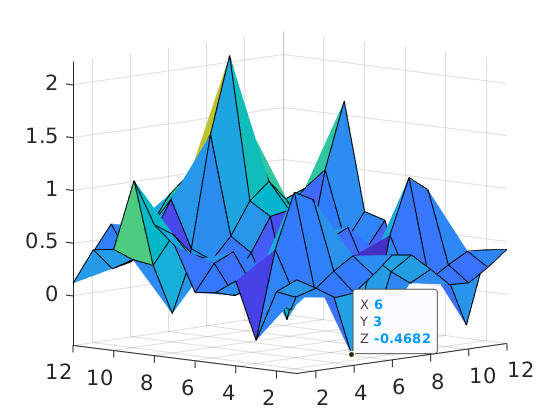}
\includegraphics[width=5.0cm]{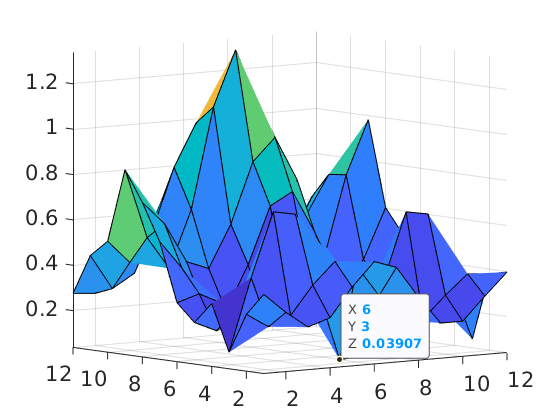}
\caption{\small Example of two-parametric PES with respect to two azimuthal  rotation angles 
(and corresponding translations) of both bio-molecule $\cal M$ and ligand $\cal L$ 
interacting in the docking process based on RS tensor decompositions of potential. 
Computations are performed on $n\times n\times n$ 
3D Cartesian grids with $n=512$ and with ranks $R_l=14, 10$ and $R_l= 8$ 
(figures left, middle and right, respectively). The CP rank of the reference tensor ${\bf P}_R $ is  $R=20$.  }
\label{fig:System_Particl_tens}
\end{figure}

In numerical practice we use the reduced parametric space so that the distance parameter varies only in the 
layer-like enveloping ring around the protein that reduces considerably the amount of computational job.
Hence, the trajectory of ligand $\cal L$ belongs to the narrow strip surrounding the bio-molecule 
$\cal M$ so that the atoms in $\cal L$ are separated from $\cal M$ via controlling the so-called van der Waals 
distance $\sigma >0$,
see for example the schematic illustration in Figure \ref{fig:layer_ind}.

Figure \ref{fig:System_Particl_tens} 
shows the examples of two-parametric PES with respect to two azimuthal  rotation angles 
 of both bio-molecule $\cal M$ and ligand $\cal L$ around their centers, 
 each cluster is analyzed in $12$ angular positions with the angular step-size $2\pi/12$.
 In each rotational position around $\cal M$ the local rotations of ligand 
 are considered on the minimal admissible distance (van der Waals distance) from protein $\cal M$.
 Here the results with different ranks $R_l=14, 10, 8$ of the 
long-range part in the reference tensor ${\bf P}_R$ (discretized single Newton kernel) are presented,
 where $R=20$ is the full rank of ${\bf P}_R$. Notice that calculations with $R_l <8$ do not recover 
 the correct posing configuration.
  \begin{figure}[htb]
\centering
\includegraphics[width=5.0cm]{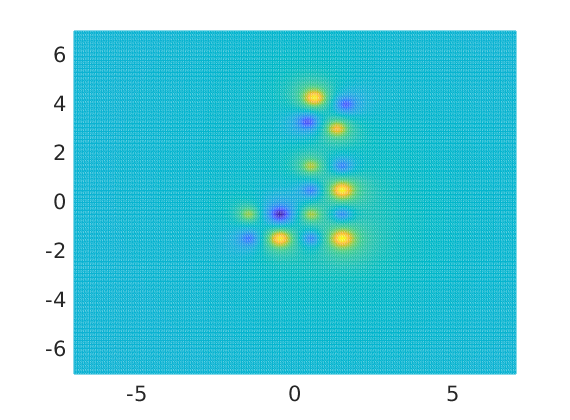}
\includegraphics[width=5.0cm]{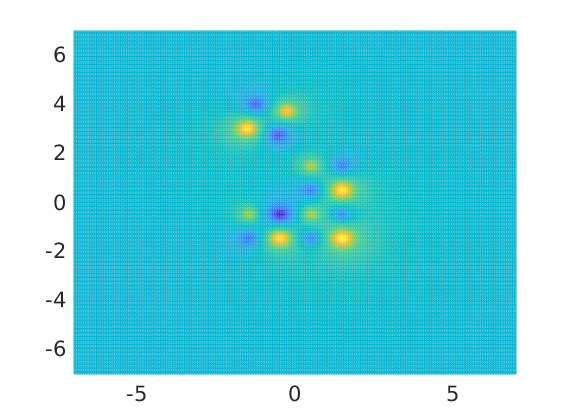}
\includegraphics[width=5.0cm]{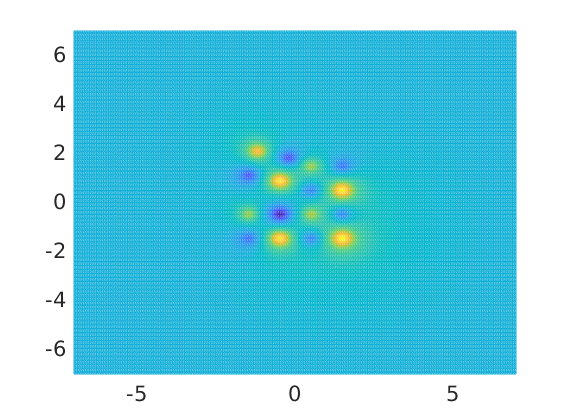}
\includegraphics[width=5.0cm]{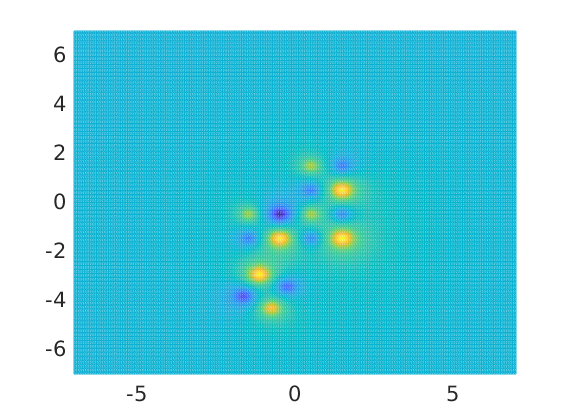}
\includegraphics[width=5.0cm]{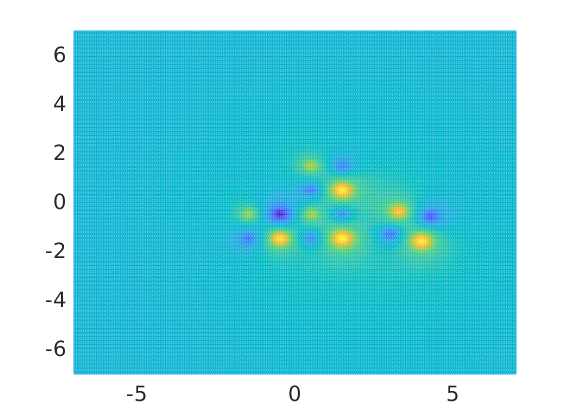}
\includegraphics[width=5.0cm]{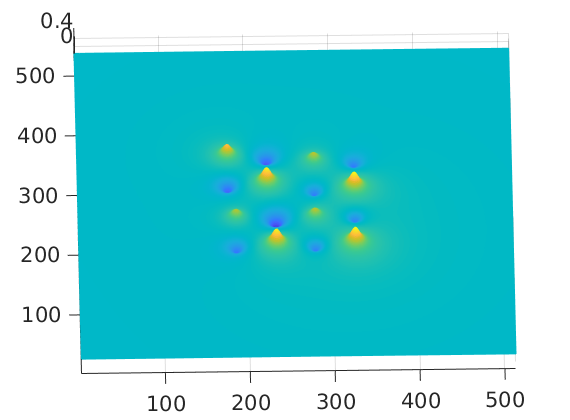}
\caption{\small Snapshots of the electrostatic potential of molecular clusters $\cal M$ and  $\cal L$ at
five different angles of azimuthal  rotations (from total $12\times 12$ mutual rotations). 
Final result of posing process is presented at bottom, right. }
\label{fig:System_Particl_tens_dynamics}
\end{figure}   
 
The numerics illustrate that energy minimization based on the low-rank parametrization of the long-range part
for the molecular clusters $\cal M$ finds the same posing positions 
both in the case of rather small rank parameters and
 in the case of full rank ($R=20$ for the reference tensor ${\bf P}_R$) tensor representation 
 of the electrostatic potential ${\bf P}_{\cal M}$.
The values of minimal energy (binding energy) remains almost the same in all three cases.

Figure \ref{fig:System_Particl_tens_dynamics} demonstrates five snapshots of the electrostatic potential 
of the synthetic ``protein-ligand'' construction   
of two molecular clusters $\cal M$ and  $\cal L$ (located in one plane) along the line of energy minimization
process for the docking positions, 
at different angles of azimuthal  rotations of both ``protein'' and ``ligand''. 
At every rotation angle the electrostatic interaction energy is computed by 
(\ref{eqn:EnergySum_partial_lig}), using the long-range tensor
representation of the electrostatic potential of the cluster $\cal M$.

\subsection{``Finding back'' a site for a detached fragment of a protein}.
\label{ssec:find_back_example}

In what follows, we describe the numerical examples of finding back the initial position 
of a fragment of the protein (a group of connected particles), after it was detached from some particular
place at the \emph{reference protein} with 379 atoms, see Fig. \ref{fig:Protein_example_1}, left. 

In example 1, we select a \emph{sample protein} composed of 79 atoms that 
is a part of the initial reference bio-molecule, see Fig. \ref{fig:Protein_example_1} (right). 
Then, we define a small ``ligand'', where the  latter is obtained 
by separating a small fragment (containing 5 atoms) of the sample protein (sub-protein) - a group of connected particles
at the protein surface, see Fig. \ref{fig:Protein_sample_1}, middle.  
The size and the location of the ``ligand'' on the protein is chosen rather  arbitrarily, 
with the only criterion that the   binding energy of the detached compound 
should be negative (the structure is stable).
  \begin{figure}[tbh]
\centering
\vspace{-2mm}
\includegraphics[width=5.4cm]{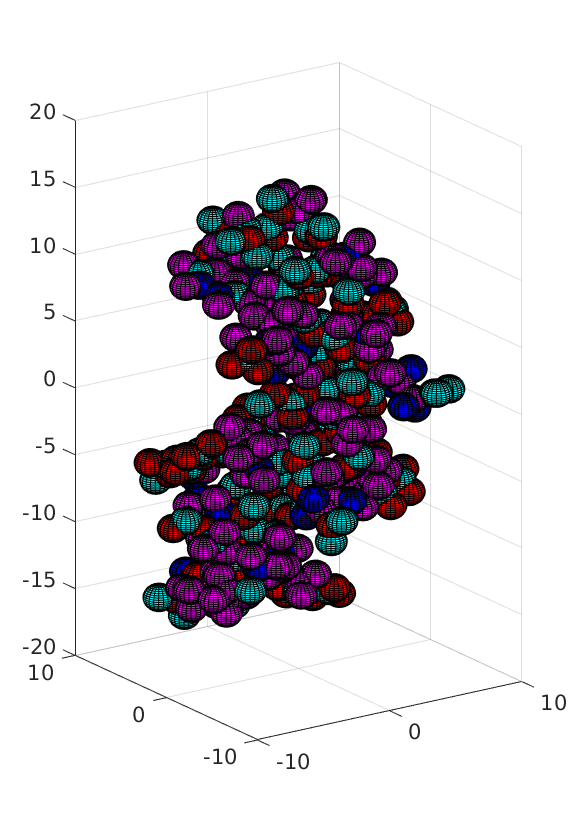}
\vspace{-5mm}
\caption{\small Example of the 379-atom protein.}
\label{fig:Protein_example_1}
\end{figure}

After detaching a group of particles, assigned as a ligand,  we then try to perform   ``docking'' by 
finding the original place of this ``ligand'' via searching for the minimum on the electrostatic interaction energy 
landscape. In our methodology, the search is done in a ``blind way'', without any preliminary knowledge 
of the original position of the detached group of particles. Interaction  energy of the protein is computed 
by using  the RS tensor representation of the electrostatic potential as shown in Lemma \ref{lem:energy_ligand}.
 
  In these numerical experiments we confine ourselves by a conformational search only at a given altitude level of the protein,
 that is on a given plane, by computing the  electrostatic interaction energy for azimuthal
 rotations of the ``ligand'' around the protein. Recall that we simplify the presentation, by considering the experiment  
 only using a part of an initial protein - sub-protein extracted in some layer of certain width along z-axis. 
 Our numerical experiments present the results of application of the  Algorithm PPIEM to the protein-ligand
 system described above.

 \begin{figure}[h]
\centering
\vspace{-0.5cm}
\includegraphics[width=7.4cm]{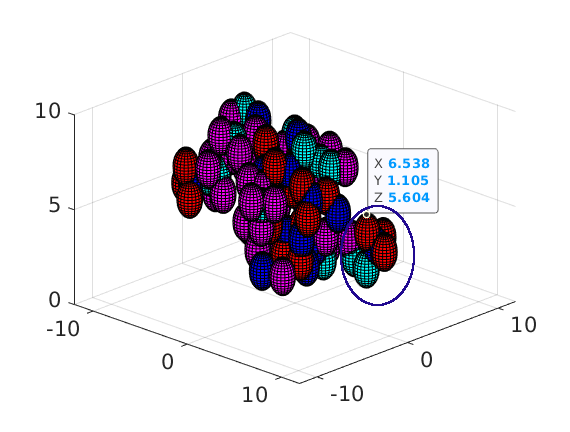}
\includegraphics[width=7.4cm]{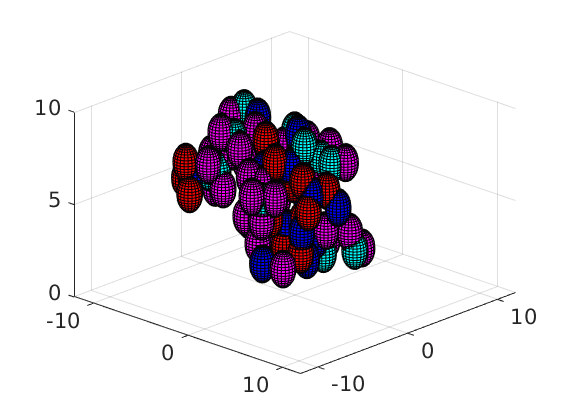}\hspace{-0.6cm}\\
\includegraphics[width=7.4cm]{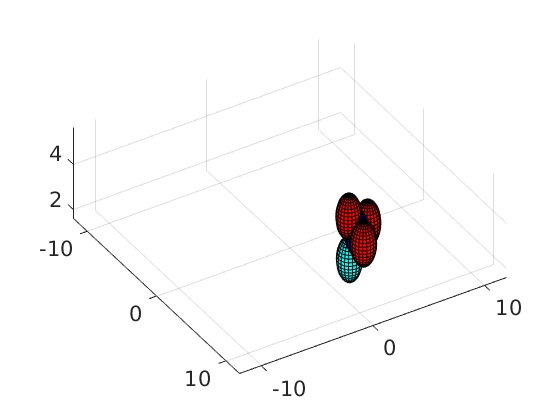}\hspace{-0.6cm}
\includegraphics[width=7.4cm]{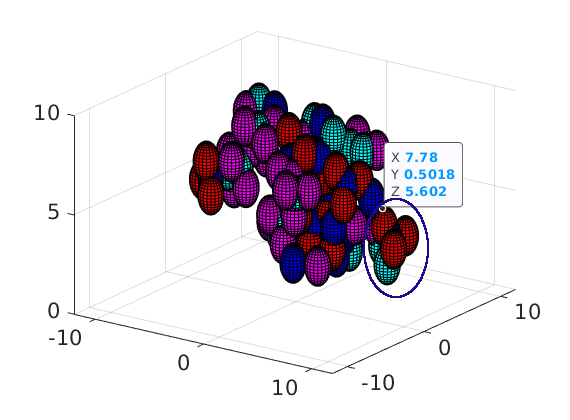}
\caption{\small Example 1. Top, left: the circle shows the group of atoms to be separated. The sub-problem without an extracted ``ligand'' (top, right),
the extracted ``ligand'' (bottom, left) and the result of docking with the extracted part (bottom, right). }
\label{fig:Protein_sample_1}
\end{figure}

 Figures \ref{fig:Protein_example_1} -- \ref{fig:Protein_sample_1} 
demonstrate a search of the position of the detached group of particles in a part of a protein molecule containing 379 atoms.  
The electrostatic interaction energy of the protein is $- 27.9488$ hartree.
Figure  \ref{fig:Protein_example_1}, left, shows the original protein. 
The following colors are used for the particles: red and blue colors denote the particles with positive and negative 
partial\footnote{Particle charges in protein data bases, called partial 
charges, are obtained by experimental measurements.} charges greater than 0.2 and lower than -0.2, correspondingly. 
For the particles with the partial charges in the interval $[-0.2,0.2]$, the magenta and cyan colors are used. 

Figure \ref{fig:Protein_example_1}, right, shows the part of the reference protein (sample protein) 
selected in the z-axis interval [2.5, 9] \AA{}, 
where we perform a detachment of a group of particles.   The location of this group is pointed by the data tip, 
showing the (x,y,z)-coordinates of one of the particles, that is $(6.5,1.1,5.6)$ \AA{}. 
The electrostatic interaction energy of this part of the protein is $- 3.45 $ 
hartree. Figure \ref{fig:Protein_sample_1}, left, shows the part of the 
protein, where this group of particles is missing (detached).
The energy of this group becomes higher, $- 3.28 $ hartree.
Figure \ref{fig:Protein_sample_1}, middle, shows the detached ``ligand'' (with the energy $- 0.18 $ hartree), 
for which we should find back the matching position. 
  Figure \ref{fig:Protein_sample_1}, right, shows the result 
 of the search for conformation site by using our algorithms. 
 It is shown, that using our blind search, the ligand has found its position only slightly different from the initial one. 
 The (x,y,z) coordinate of the same particle as in Figure \ref{fig:Protein_example_1}, right, is now $(7,7,0.5,5.6)$\AA{},
 see Fig. \ref{fig:Protein_sample_1}, right. 
 The electrostatic interaction energy of the sub-protein with docked ``ligand'' is $- 3.50 $ hartree, which is even lower, 
   than that of the initial configuration. 
   
   Figure \ref{fig:Potential_example_1} visualizes the electrostatic potential 
   for the initial (left) and final (right) configurations of the docked sub-protein. 
    Figure \ref{fig:Protein_matrix} (left) shows the surfaces of the electrostatic interaction 
   energies for sub-protein of example 1 computed for every rotation angle of ligand, i.e.
   24 rotations around the sub-protein, and 12 rotations of the ligand around its center.
   
   \begin{figure}[tbh]
\centering
\includegraphics[width=6.8cm]{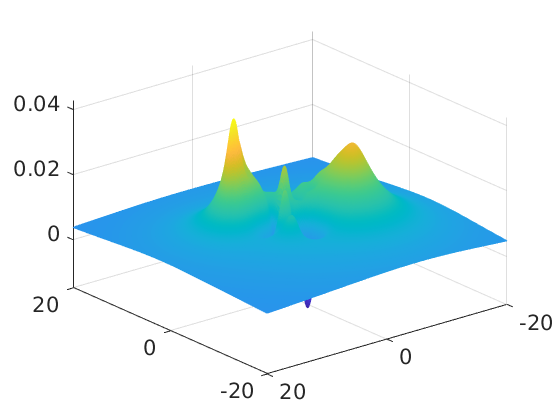}
 \includegraphics[width=6.8cm]{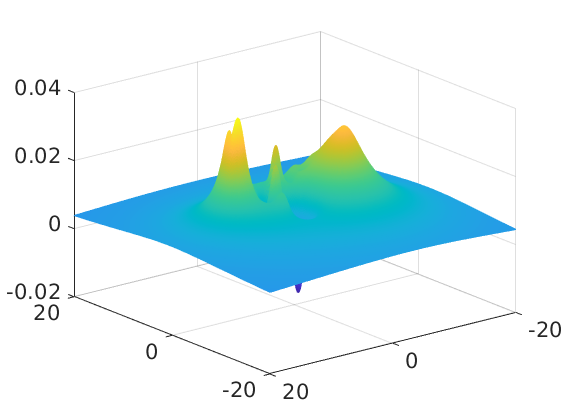}
\caption{\small The electrostatic potential of the initial sub-protein in example 1 at the search plane (left) 
and of the final result of docking process (right).}
\label{fig:Potential_example_1}
\end{figure}

In example 2, we present another simulation results for the small part of the protein in 
the z-axis interval [8, 15] \AA{} are presented    in Figure \ref{fig:Protein_example_3}. 
Electrostatic interaction energy of this part of the protein is $-4.79$ hartree, while its 
  energy after detaching a small group of particles (bottom left), increases to $-4.71 $ hartree. 
  After finding the 
right docking position of the detached group of particles (``ligand'') the resulting energy  
becomes $-4.87$ hartree. The respective potential energy surface corresponding to 28 rotations 
around the sub-protein and 12 self-rotations of the 
``ligand'' is presented in Figure \ref{fig:Protein_matrix} (right).

 \begin{figure}[tbh]
\centering
\includegraphics[width=7.6cm]{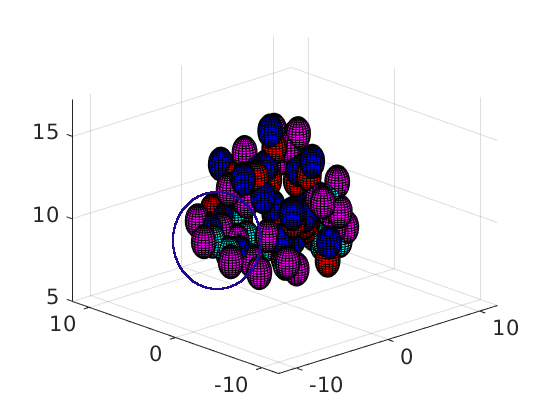}
\includegraphics[width=7.6cm]{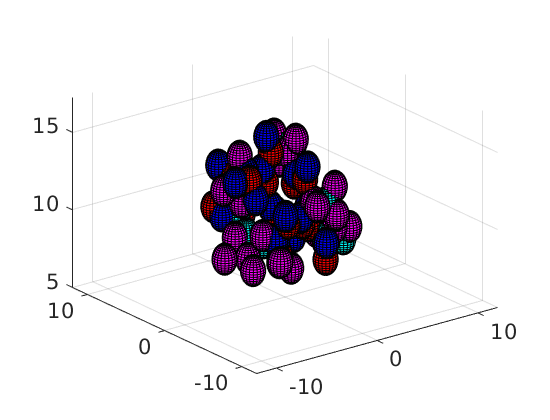}
\vspace{-0.3cm}
\includegraphics[width=6.0cm]{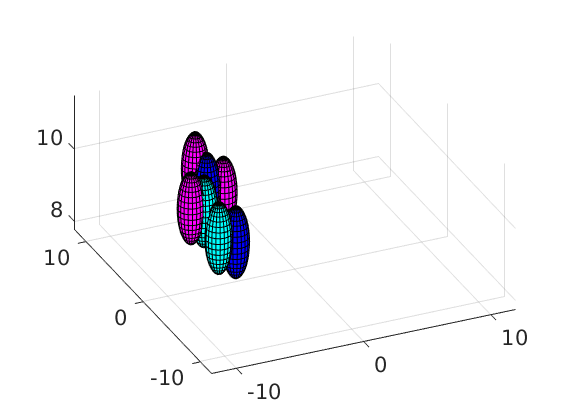}
\includegraphics[width=7.6cm]{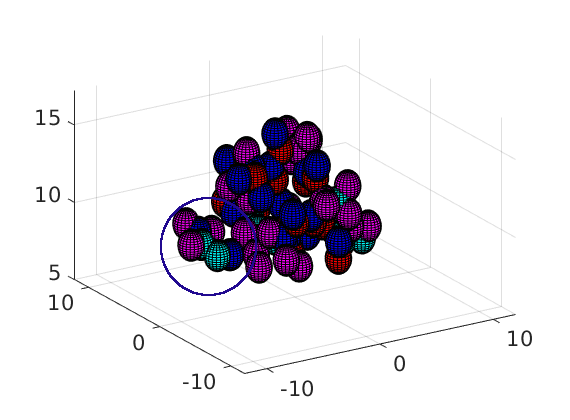}
\caption{\small Sub-protein example 2. Top: initial part of the protein (left), and the same part 
without  an extracted ``ligand'' (right).
Bottom: The extracted ``ligand'' (left) and the result of docking of the extracted part (right).  }
\label{fig:Protein_example_3}
\end{figure}

\begin{figure}[htb]
\centering
\includegraphics[width=7.3cm]{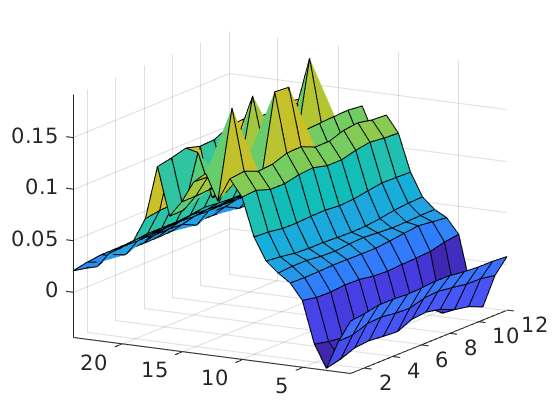}
\includegraphics[width=7.3cm]{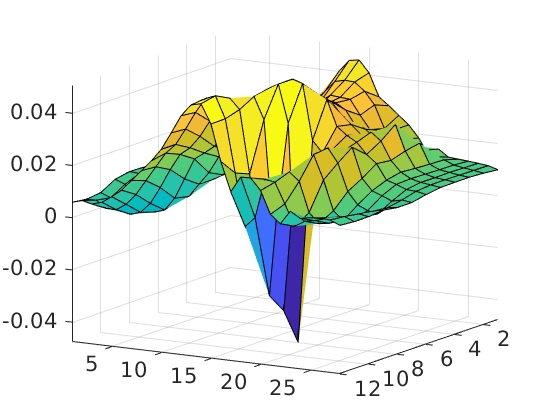}
\caption{\small The electrostatic potential energy surface
for the above examples 1 and 2, shown in Figures \ref{fig:Protein_sample_1} (left) and 
\ref{fig:Protein_example_3} (top left).}
\label{fig:Protein_matrix}
\end{figure}

 
 \section{Looking to further prospects}\label{sec:Docking_Prospects_summary} 
 
\subsection{Towards model reduction: why and how}
\label{ssec:model_reduct} 
 The benefits of numerical techniques presented in the paper are mainly concerned with the use of
substantial data compression via data-sparse rank-structured tensor parametrization of the electrostatic potential 
 and some other simplifications of the original problem setting thus leading to the model reduction, see also \cite{BenFa-book_2024} for theoretical background.
 
 We summarize that our approach aims on the substantial reduction of the computational and storage costs of the numerical schemes which is based on the following tools to be implemented by 
using low rank tensor representations of the interaction potential and respective multi-linear algebra.
 \begin{itemize}
  \item The low $\varepsilon$-rank representation of the collective electrostatic potential of the  bio-molecule in the RS tensor format.
 \item Fast evaluation of the corresponding   energy functional and force field  by using   tensor representation of the potential in the full computational box.
  \item Optimization of the steepest descent search on the potential energy surface only in the narrow enveloping strip around the bio-molecule.
  \item Using add-and-compress strategy for the robust rank reduction procedure in case of large molecular systems.
  \end{itemize}
Our techniques particularly apply to the system of one single bio-molecule (say, protein) and small ligand compound.
The constrained free-space energy minimization benefits from enhancing the energy calculation by using only 
the low-rank representation of the long-range part in the electrostatic potential of large bio-molecule.

The presented approach could be easily modified to the case of molecular complexes composed of several large bio-molecules
${\cal M}_1,\ldots,{\cal M}_P$.
In this case the energy expression (\ref{eqn:EnergySum_partial_lig}) should be modified as follows (say, on molecule 
${\cal M}_P$ considered as a ligand),
\begin{equation}\label{eqn:EnergySum_Mcomplex}
E_N({\cal M}_1,\ldots,{\cal M}_P)= \sum\limits_{{\ell}_P=1}^{M_P} z_{{\ell}_p} 
\left(\sum\limits_{p=1}^{P-1} P_{{\cal M}_p}\right)(x_{\ell_p}),\quad x_{\ell_p} \in {\cal M}_P,
\end{equation}
 where $P_{{\cal M}_p}$ is the long-range part of the electrostatic potentials of the bio-molecule ${\cal M}_p$,
$p=1,\ldots,P$, in the respective exterior domain.

Another aspect is related to the more refined considerations based on the electrostatics calculation 
in polarized media by using the Poisson-Boltzmann equation (PBE) or some of its simplifications.
This issue will be addressed elsewhere. Here we only notice that rather accurate approximation of the far-field potential
can be calculated by using the analytic solution for the spherical solute region. 
For example, this can be used to determine the approximate boundary conditions for PBE posed in bounded domain.

In concern with the docking problem we notice that
the discretization of PBE on large $n\times n \times n$ grid in the bounding box $[-B,B]^3$ 
leads to the solution of huge system of linear (nonlinear)
equations with the matrix size $n^3 \times n^3$, see \cite{BeKKKS:21,KKKSBen:21,KwFSBen:22}. 
This indicates possible limitations of the PBE equation 
for numerical modeling of electrostatic in large bio-molecular systems. 
Moreover, the dynamical minimization 
of the binding energy functional in the protein-ligand systems requires solution of this equation for large 
amount of different configurations of the equation coefficients (different positions of ligand)
in the course of multi-parametric energy minimization. 
This makes the PBE model not promising in the case of molecular complexes and even 
in case of rather large protein-ligand systems.

In this concern, the practically tractable mathematical techniques for finding the appropriate binding sites 
requires certain model reduction for PBE that properly recovers the interaction potential in the region
outside of the protein. It is also interesting to understand how to adapt the method developed in this paper
(for the case of free space molecular system) to the solution of posing problem in the case of polarized media.
These questions will be addressed elsewhere. 

\subsection{Collective force field for rigid clusters of charged particles}
\label{ssec:Forces-Cluster}

In numerical techniques based on deterministic/stochastic dynamics the efficient gradient 
(or stochastic gradient) calculation plays the central role.
In the framework of docking problems one deals with the dynamics of rigid clusters of charged particles.
In this setting the force calculation requires certain modification compared with the case 
of single particles, see \S\ref{ssec:Forces_Applic}.

One of the specific problem of rigid docking is that the particles in ligand could not move independently 
since the geometric shape 
of the cluster ${\cal L}$ is fixed and the atoms in ligand do not have individual degrees of freedom
($d$ parameters per particle).
This means that their collective dynamics is uniquely determined by only few parameters, 
the position $x_0$ ($d$ parameters)
of the geometric center and by the rotational angles of the system around this center, i.e., 
by $d-1$ angular parameters. Hence the total number of parameters, $2d-1$, does not depend on $L$, the size of 
ligand ${\cal L}$. 

Given position of the bio-molecule ${\cal M}$, we are interested in calculation of force vectors applied 
only to the whole rigid system
${\cal L}$, thus predicting the motion of geometric center $x_0$, described by the force vector ${\bf F}_0(x_0)$, 
as well as by the variation of rotational angles around the fixed point $x_0$, 
described the set of vectors $\{{\bf F}_{0,{\phi}}\}$, applied to some fixed point $x_\phi$. This set includes $d-1$ vectors,
one for the case of flat molecules (one angular parameter), and two vectors in case of full 3D docking process
(two angular parameters). 

The vectors ${\bf F}_0(x_0)$ and ${\bf F}_{0,{\phi}}$ can be calculated as the resultant of forces 
${\bf F}_\ell$, $\ell=1,\ldots,L$, applied to each of $L$ particles in the rigid cluster ${\cal L}$.
The total numerical cost of the resultant force calculation will 
be estimated based on Lemma \ref{lem:energy_ligand}, that is $(d-1)RL$ as the dominating term, plus the cost of simple 
geometric operations with $L$ force vectors calculated at each particle center in ligand.

Let the unit vectors ${\bf v}_\ell$ be directed from the cluster center $x_0$ to the cluster particles located at 
points $x_\ell \in {\cal L}$, $\ell=1,\ldots,L$, and ${\bf n}_\ell$ be the normal vectors to ${\bf v}_\ell$
in the plane spanned by the pare $[{\bf v}_\ell, {\bf F}_\ell]$ and directed from point $x_\ell$. 
Denote by ${\bf F}_{x_0,x_\ell}$ and ${\bf F}_{\phi,x_\ell}$  the projections of ${\bf F}_\ell$
onto the vectors ${\bf v}_\ell$ and ${\bf n}_\ell$, respectively, such that 
${\bf F}_\ell= {\bf F}_{x_0,x_\ell} + {\bf F}_{\phi,x_\ell}$.
Shifting all vectors ${\bf F}_{x_0,x_\ell} \mapsto {\bf F}_{x_0,\ell}$ to the chosen geometric 
center $x_0$ and summing them up we obtain
\begin{equation}\label{eqn:Field_Xcenter}
 {\bf F}_0(x_0)=\sum\limits_{\ell=1}^L {\bf F}_{x_0,\ell}.
\end{equation}
To calculate the tangential force, we chose the reference circle (sphere in 3D) of radius $r_0$ and shift all vectors
${\bf F}_{\phi,x_\ell} \mapsto {\bf F}_{\phi,\ell}$ along direction of  ${\bf v}_\ell$ to the distance $r_0$ 
from $x_\ast$ with the proper scaling
and preserving their direction along the normal vector ${\bf n}_\ell$. 
Now all adduced tangential vectors ${\bf F}_{\phi,\ell}$
act along the same circular (spherical) curve and hence their total action is calculated by the 
simple summation in tangential direction, to obtain
\begin{equation}\label{eqn:Field_phi}
 {\bf F}_{0,{\phi}}= \sum\limits_{\ell=1}^L {\bf F}_{\phi,\ell}.
\end{equation}
Now we notice that the cost of force calculation at the fixed point, ${\bf F}_\ell$, is of the order of $O(d(d-1) R_L)$,
i.e. of the same order as those for the potential evaluation at the fixed atomic center.
This proves the following lemma.
\begin{lemma}\label{lem:Force_ligand}
Given force vectors ${\bf F}_\ell$, $\ell=1,\ldots,L$, on each of $L$ particles in ${\cal L}$,
 the translation and rotational parts of collective forces acting on the rigid ligand ${\cal L}$ can be calculated 
 by (\ref{eqn:Field_Xcenter}) and (\ref{eqn:Field_phi}), respectively.
 The numerical cost for the force calculation on the rigid ligand ${\cal L}$ is estimated by 
 \[
  Q({\bf F}_0(x_\ast),{\bf F}_{0,{\phi}}) = O(R_L L),
 \]
 where $R_L$ is the CP rank of the long-range part in the trace of tensor ${\bf P}_{\cal M}$
 over small bounding box $\Pi \supset {\cal L}$, ${{\bf P}_{{\cal M},R}^{long}}_{|\Pi}$.
\end{lemma}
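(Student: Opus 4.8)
The plan is to prove the statement in two steps: first, that each of the $L$ per-particle force vectors ${\bf F}_\ell$ can be made available at cost $O(R_L)$, and second, that assembling ${\bf F}_0(x_0)$ and ${\bf F}_{0,\phi}$ from the ${\bf F}_\ell$ through (\ref{eqn:Field_Xcenter}) and (\ref{eqn:Field_phi}) adds only a lower-order geometric overhead. Since (\ref{eqn:Field_Xcenter}) and (\ref{eqn:Field_phi}) are precisely the definitions of these two collective vectors, the correctness part reduces to checking that the underlying construction is well defined; the substantive part is the complexity count.

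For the first step I would invoke the finite-difference force scheme of \S\ref{ssec:Forces_Applic}. By (\ref{eqn:differ_ligand}), the components of ${\bf F}_\ell$ at a grid point $x_\ell\in{\cal L}$ are first differences of the long-range potential, and since ${\cal L}\subset\Pi$ and the stencils in (\ref{eqn:differ_ligand}) touch only grid points of $\Pi_h$ (after padding $\Pi$ by one grid layer), they can be evaluated from the restricted canonical tensor ${{\bf P}_{{\cal M},R}^{long}}_{|\Pi}$ of CP rank $R_L$ rather than from the global tensor; by the count preceding this lemma each ${\bf F}_\ell$ then costs $O(d(d-1)R_L)$, i.e. $O(R_L)$ for fixed $d$, which matches the per-atom cost of the energy evaluation in Lemma \ref{lem:energy_ligand}. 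For the second step, for each $\ell$ I would form ${\bf v}_\ell=(x_\ell-x_0)/\|x_\ell-x_0\|$, the in-plane unit normal ${\bf n}_\ell\perp{\bf v}_\ell$ in $\mbox{span}[{\bf v}_\ell,{\bf F}_\ell]$, and the orthogonal split ${\bf F}_\ell={\bf F}_{x_0,x_\ell}+{\bf F}_{\phi,x_\ell}$ with ${\bf F}_{x_0,x_\ell}=({\bf F}_\ell\cdot{\bf v}_\ell){\bf v}_\ell$ (the degenerate cases $x_\ell=x_0$ or ${\bf F}_\ell\parallel{\bf v}_\ell$ contribute nothing to the relevant sum). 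Shifting the radial pieces to $x_0$ and summing gives ${\bf F}_0(x_0)$ as in (\ref{eqn:Field_Xcenter}); sliding the tangential pieces along ${\bf v}_\ell$ to a common reference circle (sphere) of radius $r_0$, with the radial rescaling that preserves the moment of the force about $x_0$ and the direction kept along ${\bf n}_\ell$, and summing in the tangential direction gives ${\bf F}_{0,\phi}$ as in (\ref{eqn:Field_phi}); both loops cost $O(dL)$.

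Collecting the estimates, the full computation costs $O(d(d-1)R_L L)+O(dL)=O(R_L L)$ for fixed $d$, which is the claimed bound $Q({\bf F}_0(x_\ast),{\bf F}_{0,\phi})=O(R_L L)$. I expect the only genuinely nontrivial point to be the replacement of the global tensor by its $\Pi$-restriction inside the force evaluation: one must argue that recompressing ${\bf P}_{{\cal M},R}^{long}$ on the small moving box $\Pi$ yields a rank $R_L$ that is both much smaller than the global rank $R_\Omega$ and essentially independent of the number of atoms $M$ in ${\cal M}$ — which is exactly the content of Theorem \ref{thm:Rank_LongRange} and is borne out numerically in Table \ref{Tab:Times_ranks_3D} — together with the bookkeeping that the finite-difference stencils do not leave $\Pi_h$. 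Once this is granted, the remaining operations — the projections, the radial rescalings, and the two summations (\ref{eqn:Field_Xcenter}) and (\ref{eqn:Field_phi}) — are elementary vector arithmetic absorbed into the $O(R_L L)$ bound, so no further difficulty arises.
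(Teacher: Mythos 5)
Your proposal follows the same route as the paper: the per-particle forces ${\bf F}_\ell$ are obtained from the finite-difference scheme (\ref{eqn:differ_ligand}) applied to the low-rank long-range tensor at cost $O(d(d-1)R_L)$ each, and the collective vectors are then assembled by the radial/tangential decomposition and the summations (\ref{eqn:Field_Xcenter})--(\ref{eqn:Field_phi}) at an extra $O(dL)$ geometric overhead, giving $O(R_L L)$ in total. Your additional remarks on the $\Pi$-restriction of the tensor (via Theorem \ref{thm:Rank_LongRange} and Table \ref{Tab:Times_ranks_3D}), the stencil staying inside $\Pi_h$, and the degenerate cases are sensible elaborations of points the paper leaves implicit, but they do not change the argument.
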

Lemma \ref{lem:Force_ligand} indicates that the cost of force calculation for rigid ligand is of the same 
order as for the binding energy calculation in some fixed position of the ligand ${\cal L}$.
Hence it can be useful in combination with the detailed investigation of the local energy landscape.

 \section{Conclusions}\label{sec:Docking_Conclusions}

 We introduce the new numerical method for fast calculation of electrostatic interaction energy of large multi-particle systems which explicitly use the 
 low-rank representation for the long-range part in electrostatic potential of a bio-molecule.
  In application to numerical modeling of protein-ligand docking problem, this allows fast energy and force computation
 where the constrained energy minimization process includes multiple evaluation 
 of the energy functional in the course of posing process. 
 
Lemma \ref{lem:energy_ligand} leads to the important conclusion that application of the RS tensor decomposition 
of multi-particle electrostatic potential 
allows an implementation of the posing algorithm where  the numerical cost of the binding energy calculation
very mildly (logarithmically) depends on the size of the target bio-molecule  that may include many hundred or even thousand atoms. 
Hence, this cost is practically proportional to the number of dynamical 
steps in the energy optimization algorithm, as well as to the size of a small ligand molecule.

 Introducing this beneficial numerical techniques is one of the main results of this paper.
We demonstrate the practical performance of our method on some synthetic examples as well for moderate size bio-molecules. 
In what following, we underline the benefits of the presented method:
\begin{itemize}
 \item Efficient tensor-based calculation of the collective electrostatic potential of the protein-ligand complex on the 
   $n\times n \times n$ 3D grid and its compact $O(n)$-complexity parametric  representation by using the RS tensor format.
   \item Fast  energy and force calculation  at the cost that is almost independent on the size of protein.
  \item Tensor numerical methods allow to use large $n\times n \times n$ 3D representation grids of size of the order of $n^3 \approx 10^{15}$
  which makes it possible to accurately calculate the electrostatic 
  potentials and interaction energy of large protein complexes  (or even parts of the DNA).
 \end{itemize}
 
Presented theoretical arguments and respective numerical analysis 
justify  \emph{the proof of concept} for possible
application of tensor-based computational techniques in the framework of real-life problems such as 
numerical modeling of protein-ligand docking and classification of large bio-molecules as well as in the deterministic/stochastic multi-particle dynamics.

\begin{footnotesize}

 \bibliographystyle{unsrt} 
\bibliography{BSE_Fock_Sums1.bib,docking_1.bib}
\end{footnotesize}

\end{document}